\newtheorem{theorem}{Theorem}[section]
\newtheorem{proposition}[theorem]{Proposition}
\newtheorem{lemma}[theorem]{Lemma}
\newtheorem{corollary}[theorem]{Corollary}
\theoremstyle{definition}
\newtheorem{remark}[theorem]{Remark}
\newtheorem{example}[theorem]{Example}
\newtheorem{question}[theorem]{Question}
\numberwithin{equation}{section}
\begin{document}

\title[Zero Lie product determined Banach algebras, II]{Zero Lie product determined Banach algebras, II}

\author{J. Alaminos}
\address{Departamento de An\' alisis
Matem\' atico\\ Fa\-cul\-tad de Ciencias\\ Universidad de Granada\\
18071 Granada, Spain}
\email{alaminos@ugr.es}
\author{M. Bre\v sar}
\address{Faculty of Mathematics and Physics\\
University of Ljubljana\\
Jadranska 19, 1000 Ljubljana,
 and \\
Faculty of Natural Sciences and Mathematics\\
University of Maribor\\
Koro\v ska 160, 2000 Maribor, Slovenia} \email{matej.bresar@fmf.uni-lj.si}
\author{J. Extremera}
\address{Departamento de An\' alisis
Matem\' atico\\ Fa\-cul\-tad de Ciencias\\ Universidad de Granada\\
18071 Granada, Spain}
\email{jlizana@ugr.es}
\author{A.\,R. Villena}
\address{Departamento de An\' alisis
Matem\' atico\\ Fa\-cul\-tad de Ciencias\\ Universidad de Granada\\
18071 Granada, Spain}
\email{avillena@ugr.es}

\date{}

\begin{abstract}
A Banach algebra $A$ is said to be zero Lie product determined if
every continuous bilinear functional
$\varphi \colon A\times A\to \mathbb{C}$ satisfying
$\varphi(a,b)=0$ whenever $ab=ba$
 is of the form $\varphi(a,b)=\omega(ab-ba)$ for some $\omega\in A^*$.
We prove that $A$
has this property  provided that  any of the following three conditions holds:
(i) $A$ is a weakly amenable Banach algebra with property $\mathbb{B}$ and having a bounded approximate identity, (ii) every continuous cyclic Jordan derivation from $A$ into $A^*$ is an inner derivation, (iii) $A$ is the algebra of all $n\times n$ matrices, where $n\ge 2$, over a cyclically amenable Banach algebra with a bounded approximate identity.
\end{abstract}

\subjclass[2010]{43A20,  46H05, 46L05,  47B48.}
\keywords{Zero Lie product determined Banach algebra, property $\mathbb{B}$,
 weakly amenable Banach algebra, cyclically amenable Banach algebra, Jordan derivation.}

\thanks{
The authors were supported by MINECO grant MTM2015--65020--P.
The first, the third  and the fourth named authors were supported
by  Junta de Andaluc\'{\i}a grant FQM--185.
The second  named author was supported by ARRS grant P1--0288.}

\maketitle

\section{Introduction}

Let $A$ be a Banach algebra. For $a$, $b\in A$, we will write
$[a,b]=ab-ba$ and $a\circ b = ab+ba$.
Let $\varphi \colon A\times A\to\mathbb{C}$ be a continuous bilinear functional satisfying
\begin{equation}\label{B}
a,b\in A, \ [a,b]=0 \ \Longrightarrow \ \varphi(a,b)=0.
\end{equation}
This is certainly fulfilled if $\varphi$ is of the form
\begin{equation}\label{Bl}
\varphi(a,b)=\omega([a,b])  \quad (a,b\in A)
\end{equation}
for some $\omega$ in $A^*$, the dual of $A$. We  say that $A$ is  a
\emph{zero Lie product determined Banach algebra} if, for every continuous bilinear functional
$\varphi \colon A\times A\to\mathbb{C}$ satisfying~\eqref{B}, there exists $\omega\in A^*$ such that~\eqref{Bl} holds.
This notion, introduced in our recent paper~\cite{ABEVi},
can be viewed as the  Lie  analogue of
the notion of a Banach algebra with property $\mathbb B$, introduced in~\cite{ABEV0} and subsequently studied in several papers
(see~\cite{ABESV} and references therein).

The main result of~\cite{ABEVi} states that the group algebra $L^1(G)$, where
$G$ is an \emph{amenable} locally compact group,  is zero Lie product determined. In
Section~\ref{s2} we take a different approach and  in particular show that this is actually
true for \emph{every} locally compact group $G$. The
main  result (Theorem~\ref{t1}) shows much more, namely, that
every  weakly amenable Banach algebra with property  $\mathbb{B}$ and
 a bounded approximate identity satisfies a certain generalized version of the zero Lie product determined property. Yet another approach is taken in
Section~\ref{s3}, connecting
the problem of showing that a Banach algebra is zero Lie product determined  with the problem of showing that a Jordan derivation is an (inner) derivation.
It is interesting to note that each of the two approaches yields the fact that $C^*$-algebras are zero Lie product determined. This was also observed in~\cite{ABEVi}, but derived as a simple corollary to a result by
Goldstein~\cite{G}. Finally, in Section~\ref{s4}, we use the  result of
Section~\ref{s3} to show that
if $A$ is a  cyclically amenable Banach algebra with a bounded approximate identity, then
$M_n(A)$,
the Banach algebra of $n\times n$ matrices (with $n\ge 2$) over  $A$, is zero Lie product determined.

\section{Weak amenability and property $\mathbb{B}$}\label{s2}
Recall that a Banach algebra $A$ is said to be \emph{weakly amenable}
if every continuous derivation from $A$ into $A^*$ is inner.
For a thorough treatment of this property and an account of many interesting examples of
weakly amenable Banach algebras we refer the reader to~\cite{D}.
We should remark that each $C^*$-algebra and the group algebra $L^1(G)$ of each locally compact group
$G$ are weakly amenable~\cite[Theorems 5.6.48 and 5.6.77]{D}.
If the Banach algebra $A$ is commutative, then $A^*$ is a commutative Banach $A$-bimodule and therefore
a derivation from $A$ into $A^*$ is inner if and only if it is zero.
It is worth pointing out that a basic obstruction to the weak amenability
is the existence of non-zero, continuous point derivations~\cite[Theorem~2.8.63(ii)]{D}. Recall that a linear functional $D$ on $A$
is a \emph{point derivation at} a given multiplicative linear functional $f$ if
\begin{equation}\label{pd}
D(ab)=D(a)f(b)+f(a)D(b) \quad  (a,b\in A).
\end{equation}

We say that a Banach algebra $A$ has \emph{property $\mathbb{B}$} if for every continuous bilinear functional
$\varphi \colon A\times A\to \mathbb{C}$, the condition
\begin{equation}\label{B1}
a,b\in A, \ ab=0 \  \Longrightarrow \  \varphi(a,b)=0
\end{equation}
implies the condition
\begin{equation}\label{B11}
\varphi(ab,c)=\varphi(a,bc)  \quad (a,b,c\in A).
\end{equation}
In~\cite{ABEV0} it was shown that many important examples of Banach algebras,
including $C^*$-algebras and group algebras $L^1(G)$ where $G$ is any locally compact group,
have property $\mathbb{B}$.

For a Banach space $X$, we denote by $\mathcal{B}(X)$ the Banach algebra of bounded
linear operators on $X$,  and  by $\mathcal{A}(X)$ the closed ideal of $\mathcal{B}(X)$
consisting of approximable operators.

\begin{proposition}\label{pr01}
Let $X$ be a Banach space. Then:
\begin{enumerate}
\item
The Banach algebra $\mathcal{A}(X)$ has property $\mathbb{B}$;
\item
The Banach algebra $\mathcal{B}(X^n)$ has property $\mathbb{B}$ for each $n\in\mathbb{N}$ with $n\ge 2$.
\end{enumerate}
\end{proposition}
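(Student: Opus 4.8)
The two parts require different ideas, so I would handle them separately.

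For part (1), the plan is to use the density of the finite‑rank operators. Since $\varphi$ is bounded and multiplication in $\mathcal{A}(X)$ is continuous, the trilinear map $(S,T,U)\mapsto\varphi(ST,U)-\varphi(S,TU)$ is continuous, so it suffices to prove the identity $\varphi(ST,U)=\varphi(S,TU)$ (which is equivalent to property $\mathbb{B}$ in the presence of \eqref{B1}) for finite‑rank $S,T,U$, and then, by multilinearity, for rank‑one operators $S=x\otimes f$, $T=y\otimes g$, $U=z\otimes h$, where $(x\otimes f)(w)=\langle f,w\rangle x$. From $(x\otimes f)(y\otimes g)=\langle f,y\rangle\,x\otimes g$ one sees that this product vanishes exactly when $\langle f,y\rangle=0$, so condition \eqref{B1} forces the bounded $4$‑linear form $\Phi(x,f,y,g)=\varphi(x\otimes f,y\otimes g)$ to vanish whenever $\langle f,y\rangle=0$. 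The key structural step is then to deduce that $\Phi(x,f,y,g)=\langle f,y\rangle\,\psi(x,g)$ for some bilinear $\psi$: fixing $x,g$, the linear functional $y\mapsto\Phi(x,f,y,g)$ kills $\ker f$ and is therefore a multiple of $f$, and comparing the resulting coefficient on the complement of a hyperplane (which spans $X^*$) pins it down to a constant depending bilinearly on $(x,g)$. Substituting this factorisation, both $\varphi(ST,U)$ and $\varphi(S,TU)$ reduce to $\langle f,y\rangle\langle g,z\rangle\,\psi(x,h)$, which settles the rank‑one case and hence all of $\mathcal{A}(X)$.

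For part (2) I would identify $\mathcal{B}(X^n)\cong M_n(R)$ with $R:=\mathcal{B}(X)$ unital, and work with the matrix units $e_{ij}$, writing a general element as a \emph{finite} sum $\sum a_{ij}e_{ij}$, so that no density argument is needed. By multilinearity, property $\mathbb{B}$ amounts to the vanishing of $\mu(a e_{ij},b e_{kl},c e_{pq}):=\varphi\bigl((a e_{ij})(b e_{kl}),c e_{pq}\bigr)-\varphi\bigl(a e_{ij},(b e_{kl})(c e_{pq})\bigr)$ for all indices and all $a,b,c\in R$. Feeding $e_{ij}e_{kl}=\delta_{jk}e_{il}$ into \eqref{B1} gives the basic relation $\varphi(a e_{ij},b e_{kl})=0$ whenever $j\neq k$, and a short case analysis using it shows that $\mu$ vanishes automatically unless $j=k$ and $l=p$. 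Thus everything reduces to the single \emph{chained} identity $\varphi(ab\,e_{il},c e_{lq})=\varphi(a e_{ij},bc\,e_{jq})$ coming from $S=a e_{ij}$, $T=b e_{jl}$, $U=c e_{lq}$.

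The main obstacle is that this chained identity is exactly the associativity one is trying to prove, and reshuffling matrix units only restates it; the circularity is broken by applying \eqref{B1} to genuine zero products of \emph{linear combinations} of the $a e_{ij}$, which is precisely where $n\ge 2$ (a spare index) enters. First, for $\beta\neq\beta'$ the relation $(x e_{i\beta}-x e_{i\beta'})(y e_{\beta q}+y e_{\beta' q})=0$, after the two cross terms are discarded by the $j\neq k$ rule, yields the \emph{middle‑index independence} $\varphi(x e_{i\beta},y e_{\beta q})=\varphi(x e_{i\beta'},y e_{\beta' q})$. Second, fixing an index $m\neq j$ and checking directly that $(a e_{ij}+ab\,e_{im})(bc\,e_{jq}-c e_{mq})=0$, the same cancellation gives $\varphi(a e_{ij},bc\,e_{jq})=\varphi(ab\,e_{im},c e_{mq})$, and one transfer $m\to j$ via middle‑index independence turns the right‑hand side into $\varphi(ab\,e_{ij},c e_{jq})$. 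Combining these with a final middle‑index transfer $l\to j$ applied to $\varphi(ab\,e_{il},c e_{lq})$ establishes the chained identity, hence property $\mathbb{B}$ for $M_n(R)$ and in particular for $\mathcal{B}(X^n)$. I note that this second argument uses nothing about $R$ beyond unitality, so it proves property $\mathbb{B}$ for $M_n(R)$ over any unital Banach algebra; the only delicate point is locating the two combination zero products, after which the verification is routine.
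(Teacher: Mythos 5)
Your proposal is correct, but it takes a genuinely different route from the paper, whose proof is essentially two citations: $\mathcal{A}(X)$ has property $\mathbb{B}$ by \cite[Example 1.3.3 and Theorem 2.11]{ABEV0} (the machinery of Banach algebras topologically generated by idempotents), and $\mathcal{B}(X^n)\cong M_n(\mathcal{B}(X))$ is generated by idempotents by \cite[Proposition 4.2]{ABESV}, so the same machinery applies. You instead argue from scratch, and both parts check out. In (1), the only nontrivial point is the factorisation $\Phi(x,f,y,g)=\langle f,y\rangle\,\psi(x,g)$: writing $\Phi(x,f,\cdot,g)=c(f)f$, the coefficient $c(f)$ is indeed constant in $f$ (for linearly independent $f,f'$ evaluate the additivity relation $c(f+f')\bigl(f(y)+f'(y)\bigr)=c(f)f(y)+c(f')f'(y)$ at $y$ with $f(y)=1$, $f'(y)=0$; homogeneity handles proportional pairs), after which $\varphi(ST,U)$ and $\varphi(S,TU)$ both reduce to $f(y)g(z)\psi(x,h)$ on rank-one triples, and multilinearity plus density of the finite-rank operators in $\mathcal{A}(X)$ finish the argument. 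In (2), both of your zero products are genuine, namely $(xe_{i\beta}-xe_{i\beta'})(ye_{\beta q}+ye_{\beta' q})=0$ and, for $m\neq j$, $(ae_{ij}+ab\,e_{im})(bc\,e_{jq}-c\,e_{mq})=0$; after the cross terms are removed by the $j\neq k$ rule they yield exactly middle-index independence and the transfer identity $\varphi(ae_{ij},bc\,e_{jq})=\varphi(ab\,e_{im},c\,e_{mq})$, which combine to the chained identity, and your case analysis reducing \eqref{B11} to that single case is also correct. As for what each approach buys: the paper's route is short but rests on imported results, while yours is elementary and self-contained, and in part (2) it proves strictly more than stated --- your computation uses neither the continuity of $\varphi$ nor, in fact, the unit of $R$ (all sums are finite and every matrix entry involved lies in $\{a,ab,bc,c,abc\}$), so it establishes property $\mathbb{B}$ for $M_n(R)$, $n\ge 2$, over an arbitrary Banach algebra $R$; this is a bilinear-functional analogue of the zero-product-determined matrix result of \cite{BGS}, whose combination trick your middle-index argument essentially reproduces.
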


\begin{proof}
From~\cite[Example 1.3.3 and Theorem 2.11]{ABEV0} we see that $\mathcal{A}(X)$  has property $\mathbb{B}$.
The Banach algebra $\mathcal{B}(X^n)$ is isomorphic to
the Banach algebra $M_n(\mathcal{B}(X))$ of all $n\times n$ matrices with entries in $\mathcal{B}(X)$.
By~\cite[Proposition~4.2]{ABESV},  $M_n(\mathcal{B}(X))$ is generated by idempotents
and~\cite[Example~1.3.3 and Theorem~2.11]{ABEV0} then show that it has property $\mathbb{B}$.
\end{proof}

\begin{remark}\label{r903}
Given a Banach space $X$, and $1\le p\le\infty$,
we write $\ell^p(X)$ for the Banach space of sequences $(x_n)$ in $X$
so that $(\Vert x_n\Vert)\in\ell^p$ with norm given by
$\Vert (x_n)\Vert=\Vert(\Vert x_n\Vert)\Vert_p$.
Since $\ell^p(X)$ is isomorphic to $\ell^p(X)\oplus \ell^p(X)$,
it follows that the Banach algebra $\mathcal{B}(\ell^p(X))$ has property $\mathbb{B}$.
In particular, the Banach algebra $\mathcal{B}(\ell^p)$ has property $\mathbb{B}$.
\end{remark}

\begin{proposition}\label{pr02}
Let $A$ be a Banach algebra.
\begin{enumerate}
\item
If $A$ is essential and has property $\mathbb{B}$,
then there are no non-zero, continuous point derivations on $A$.
\item
If $A$ is unital and contains a closed one-codimensional ideal $I$
such that $I^2$ is not dense in $I$,  then there is a non-zero, continuous point derivation on $A$.
\end{enumerate}
\end{proposition}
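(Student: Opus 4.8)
The plan is to treat the two parts separately: to prove (1) by exhibiting a suitable test functional to which property $\mathbb{B}$ applies, and to prove (2) by an explicit Hahn--Banach construction. For part (1), let $D$ be a continuous point derivation at a multiplicative linear functional $f$, so that \eqref{pd} holds; the goal is to show $D=0$. If $f=0$, then \eqref{pd} gives $D(ab)=0$ for all $a,b\in A$, so $D$ vanishes on the linear span of products and, by continuity together with the assumption that $A$ is essential, on all of $A$, whence $D=0$. The substantial case is $f\neq 0$, where $f$ is automatically continuous. Here I would introduce the continuous bilinear functional
\[
\varphi\colon A\times A\to\mathbb{C},\qquad \varphi(a,b)=D(a)f(b),
\]
and verify that it satisfies the hypothesis \eqref{B1}. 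This is the key point: if $ab=0$ then $f(a)f(b)=f(ab)=0$, so $f(a)=0$ or $f(b)=0$, while \eqref{pd} gives $0=D(ab)=D(a)f(b)+f(a)D(b)$; in either case $D(a)f(b)=0$, that is, $\varphi(a,b)=0$.

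Property $\mathbb{B}$ then yields \eqref{B11}, which for this $\varphi$ reads $D(ab)f(c)=D(a)f(bc)=D(a)f(b)f(c)$ for all $a,b,c\in A$. Choosing $c$ with $f(c)\neq 0$ gives $D(ab)=D(a)f(b)$; comparing with $D(ab)=D(a)f(b)+f(a)D(b)$ leaves $f(a)D(b)=0$ for all $a,b$, and choosing $a$ with $f(a)\neq 0$ forces $D=0$. Thus there are no non-zero continuous point derivations. The only delicate step is the verification that $\varphi$ vanishes on zero products; once this is in place the conclusion is immediate from property $\mathbb{B}$ and the Leibniz rule.

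For part (2), I would first observe that, $A$ being unital with $I$ a closed ideal of codimension one, the quotient $A/I$ is a one-dimensional unital Banach algebra, hence isomorphic to $\mathbb{C}$; the quotient map is therefore a continuous character $f\colon A\to\mathbb{C}$ with $\ker f=I$ and $f(1)=1$, and $A=I\oplus\mathbb{C}1$. Since $I^2$ is not dense in $I$, its closure is a proper closed subspace of $I$, so the Hahn--Banach theorem provides a non-zero continuous linear functional $\lambda$ on $I$ vanishing on $\overline{I^2}$; in particular $\lambda(mn)=0$ for all $m,n\in I$. I then define the continuous linear functional
\[
D\colon A\to\mathbb{C},\qquad D(a)=\lambda\bigl(a-f(a)1\bigr),
\]
obtained by applying $\lambda$ to the $I$-component of $a$; it is continuous and non-zero because $\lambda\neq 0$ on $I$.

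It remains to check that $D$ satisfies \eqref{pd}. Writing $a=m+\alpha 1$ and $b=n+\beta 1$ with $m,n\in I$ and $\alpha=f(a)$, $\beta=f(b)$, one has $ab=(mn+\beta m+\alpha n)+\alpha\beta 1$ with $mn+\beta m+\alpha n\in I$, so that $D(ab)=\lambda(mn)+\beta\lambda(m)+\alpha\lambda(n)=\beta\lambda(m)+\alpha\lambda(n)$, the term $\lambda(mn)$ vanishing precisely because $\lambda$ annihilates $I^2$. Since $D(a)=\lambda(m)$, $D(b)=\lambda(n)$, $f(a)=\alpha$ and $f(b)=\beta$, this equals $D(a)f(b)+f(a)D(b)$, so $D$ is a non-zero continuous point derivation at $f$. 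The main obstacle here is locating, via Hahn--Banach, a functional killing $I^2$: the annihilation of $I^2$ is exactly what makes the cross term disappear and the Leibniz identity hold, and it is for this that the hypothesis that $I^2$ is not dense in $I$ is indispensable.
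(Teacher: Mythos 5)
Your proposal is correct and follows essentially the same route as the paper: in part (1) you apply property $\mathbb{B}$ to the rank-one bilinear functional $\varphi(a,b)=D(a)f(b)$ (the paper uses the mirror-image $\varphi(a,b)=f(a)D(b)$, an immaterial difference), handling the case $f=0$ via essentiality exactly as the paper does. In part (2) you carry out the same Hahn--Banach construction of a functional annihilating $\mathbb{C}\mathbf{1}+\overline{I^2}$, the only difference being that you verify the identity \eqref{pd} directly by decomposing $A=I\oplus\mathbb{C}\mathbf{1}$, where the paper instead cites \cite[Proposition~2.7.11(i)]{D}.
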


\begin{proof}
\begin{enumerate}
\item  Since $A$ is essential there are no non-zero, continuous point derivations on $A$ at $0$.
Let $D\in A^*$ be a continuous point derivation at a character $f$ of $A$.
We define a continuous bilinear functional $\varphi\colon A\times A\to\mathbb{C}$ by
$\varphi(a,b)=f(a)D(b)$ for all $a$, $b\in A$.
We claim that $\varphi$ satisfies~\eqref{B1}. Suppose that $a$, $b\in A$ are such that
$ab=0$.
Then $f(a)f(b)=0$.
If $f(a)=0$, then clearly $\varphi(a,b)=0$, as claimed.
If $f(a)\ne 0$, then $f(b)=0$ and~\eqref{pd} yields $\varphi(a,b)=D(ab)-D(a)f(b)=0$, as claimed.
Let $a\in A$ be such that $f(a)=1$. Since $A$ has property~$\mathbb{B}$, \eqref{B11}  holds and we have
\begin{align*}
f(b)D(a) & = f(ab)D(a)=\varphi(ab,a) = \varphi(a,ba) \\
& =f(a)D(ba)=D(ba)=D(b)+f(b)D(a),
\end{align*}
which gives $D(b)=0$ for each $b\in A$.

\item
Let $f$ be a character of $A$ such that $\ker(f)=I$,
and let $x\in I$ be such that $x\not\in \overline{I^2}$.
Then $M=\mathbb{C}\mathbf{1}+\overline{I^2}$ is a closed subspace of $A$ and $x\not\in M$.
Hence there exists $D\in A^*$ such that $D(M)=\{0\}$ and $D(x)=1$.
By~\cite[Proposition~2.7.11(i)]{D}, $D$ is a non-zero, continuous point derivation at $f$.
\end{enumerate}
\end{proof}

\begin{corollary}
There exists a Banach space $X$ such that $\mathcal{B}(X)$ does not have property  $\mathbb{B}$.
\end{corollary}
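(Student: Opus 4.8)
The plan is to reduce the assertion to the existence of a suitable point derivation and then invoke Proposition~\ref{pr02}. Since $\mathcal{B}(X)$ is unital, it is essential; hence, by the contrapositive of Proposition~\ref{pr02}(1), it suffices to exhibit a single Banach space $X$ for which $\mathcal{B}(X)$ carries a non-zero, continuous point derivation. By Proposition~\ref{pr02}(2), this in turn would follow if we could find such an $X$ together with a closed one-codimensional ideal $I$ of $\mathcal{B}(X)$ satisfying $\overline{I^2}\neq I$. Thus the whole problem is transferred to the Banach-space side: we must find $X$ whose algebra of operators admits a character whose kernel fails to factor.

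First I would look for $X$ among the spaces with ``few operators''. A natural candidate is a space of Bourgain--Delbaen type (in the spirit of Argyros--Haydon), for which the Calkin algebra $\mathcal{B}(X)/\mathcal{K}(X)$ is small and explicitly described; here $\mathcal{K}(X)$ denotes the ideal of compact operators, which for these spaces coincides with $\mathcal{A}(X)$. Concretely, I have in mind a space $X$ for which $\mathcal{B}(X)/\mathcal{K}(X)$ is isomorphic, as a Banach algebra, to the convolution algebra $\ell^1(\mathbb{N}_0)=\{\sum_{n\ge 0}a_n t^n:\sum_n|a_n|<\infty\}$, the image of a distinguished strictly singular operator playing the role of $t$. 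On this quotient algebra the functional $f=\sum_n a_n t^n\mapsto f'(\lambda)=\sum_{n\ge 1} n a_n\lambda^{n-1}$ is, for each fixed $\lambda$ with $0<|\lambda|<1$, a non-zero continuous point derivation at the character $f\mapsto f(\lambda)$ (continuity being clear since $\sum_n n|\lambda|^{n-1}<\infty$). Composing this point derivation and this character with the quotient map $\mathcal{B}(X)\to\mathcal{B}(X)/\mathcal{K}(X)$ produces a non-zero continuous point derivation on $\mathcal{B}(X)$ itself, and Proposition~\ref{pr02}(1) then shows that $\mathcal{B}(X)$ cannot have property $\mathbb{B}$.

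An alternative route, better adapted to Proposition~\ref{pr02}(2), would use a space $X$ with the Dunford--Pettis property for which every operator is a scalar multiple of the identity modulo the weakly compact operators, so that $I:=\mathcal{W}(X)$ is a closed one-codimensional ideal. The Dunford--Pettis property forces the product of two weakly compact operators to be compact, whence $\overline{I^2}\subseteq\overline{\mathcal{K}(X)}=\mathcal{K}(X)$; provided the compact operators are not dense in the weakly compact ones, one gets $\overline{I^2}\subsetneq I$, and Proposition~\ref{pr02}(2) yields the desired point derivation.

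The genuinely hard part is not the algebra but the input: exhibiting a single Banach space realising either scenario. This is where a nontrivial construction from the structure theory of Banach spaces is needed, and I would rely on the known existence of Bourgain--Delbaen/Argyros--Haydon-type spaces whose Calkin algebra has the prescribed form. Once such a space is in hand, the verification above is routine; the only points requiring a little care are the precise identification of the Calkin algebra (or of $\mathcal{W}(X)$ and its relation to $\mathcal{K}(X)$) and the continuity of the resulting point derivation, both of which are settled by the remarks already made.
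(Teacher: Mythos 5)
Your reduction is exactly the paper's: $\mathcal{B}(X)$ is unital, hence essential, so by Proposition~\ref{pr02}(1) it suffices to exhibit one $X$ for which $\mathcal{B}(X)$ carries a non-zero, continuous point derivation. Where you diverge is the source of the example. The paper simply cites Read's space \cite{R}, which comes equipped with a closed one-codimensional ideal $I\subseteq\mathcal{B}(X)$ such that $I^2$ is not dense in $I$, and then extracts the point derivation abstractly via the Hahn--Banach argument of Proposition~\ref{pr02}(2). Your first route bypasses Proposition~\ref{pr02}(2) entirely: you pull back an explicit point derivation $f\mapsto f'(\lambda)$ along the quotient map onto a Calkin algebra isomorphic to $\ell^1(\mathbb{N}_0)$. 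The algebra there is correct: the Leibniz identity at the character $f\mapsto f(\lambda)$ is immediate, the continuity estimate $\sup_n n|\lambda|^{n-1}<\infty$ is right (and you could even take $\lambda=0$, where $D(f)=a_1$), and composing with the quotient map does yield a non-zero continuous point derivation on $\mathcal{B}(X)$ at the lifted character. Crucially, the space you gesture at does exist: it is Tarbard's Bourgain--Delbaen-type modification of the Argyros--Haydon construction, whose Calkin algebra is isometrically $\ell^1(\mathbb{N}_0)$; his earlier published spaces $X_k$, with Calkin algebra $\mathbb{C}[t]/(t^k)$ (where $f\mapsto a_1$ is a point derivation at $f\mapsto a_0$), would serve equally well and are a lighter citation. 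So your main argument is a genuine proof resting on a different, and historically much later and heavier, external input than the 1989 example the paper invokes; what it buys is a completely transparent point derivation, at the cost of a deeper construction, whereas the paper's route needs only the ideal-theoretic data of \cite{R} plus Proposition~\ref{pr02}(2). To be submissible, your version needs the precise citation rather than the phrase ``I would rely on the known existence.''

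Your second, Dunford--Pettis sketch is not a proof as it stands: it hinges on two unverified hypotheses holding simultaneously for one and the same space, namely $\mathcal{B}(X)=\mathbb{C}\,\mathrm{id}\oplus\mathcal{W}(X)$ and $\mathcal{K}(X)\neq\mathcal{W}(X)$, and you never exhibit such an $X$ (for the natural candidates, e.g.\ Koszmider-type $C(K)$ spaces, the second condition requires a separate argument). The DPP step itself, that $\mathcal{W}(X)^2\subseteq\mathcal{K}(X)$, is fine. Ironically, the phenomenon this branch is groping for is precisely what Read's construction realizes, so the honest repair for it is the very citation \cite{R} that the paper's own one-line proof uses.
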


\begin{proof}
An example of a Banach space $X$ such that $\mathcal{B}(X)$
contains an ideal $I$ satisfying  the conditions of the second assertion of Proposition~\ref{pr02}
is given in~\cite{R}. Thus there exists a non-zero, continuous point derivation on $\mathcal{B}(X)$.
The first assertion of Proposition~\ref{pr02} then establishes the result.
\end{proof}

The following result is stated  in \cite{ABEVJordan} as Theorem 2.2, but only for the case where $A$ is a $C^*$-algebra. From the proof, however, it is clear that it holds
for every Banach algebra with property $\mathbb B$ that has a bounded approximate identity. See also~\cite[Lemma 4.1]{ABEVi} which covers the general case, only
the (easy) part concerning the functional $\tau$ is missing.

\begin{lemma}\cite{ABEVJordan}\label{l1}
Let $A$ be a Banach algebra with property $\mathbb{B}$ and having a bounded approximate identity,
  and
let $\varphi\colon A\times A\to \mathbb C$ be a continuous bilinear map satisfying the condition:
\begin{equation*}
a,b\in A, \ ab=ba=0 \ \Longrightarrow \ \varphi(a,b)=0.
\end{equation*}
Then there exists  $\sigma \in  A^*$  such that
\begin{equation}\label{eqfifi}
\varphi(ab,c)-\varphi(b,ca)+\varphi(bc,a)=\sigma(abc) \quad (a,b,c\in A).
\end{equation}
Moreover, there exists $\tau\in A^*$  such that
\[
\varphi(a,b) + \varphi(b,a)= \tau(a\circ b)  \quad (a,b\in A).
\]
\end{lemma}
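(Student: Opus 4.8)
The plan is to treat the two assertions separately: the identity involving $\sigma$ is the substantive part, while the identity involving $\tau$ will follow from it by a short argument using the bounded approximate identity. Throughout, the engine is property $\mathbb{B}$, whose conclusion \eqref{B11} is available for \emph{any} continuous bilinear functional that is annihilated by one-sided zero products.

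First I would aim at the $\sigma$-identity. The difficulty is that property $\mathbb{B}$ is triggered by the condition $ab=0$, whereas our hypothesis only provides the weaker, two-sided condition $ab=ba=0$; consequently neither $\varphi$ nor any of the obvious functionals $(b,c)\mapsto\varphi(ab,c)$, $(a,b)\mapsto\varphi(ab,c)-\varphi(b,ca)$, etc., is directly seen to vanish on one-sided zero products (in each case exactly one of the two relevant products fails to vanish). The plan is therefore to reformulate the statement in the projective tensor product $A\hat\otimes A$: the hypothesis says that $\varphi$ annihilates the closed subspace spanned by the elementary tensors $a\otimes b$ with $ab=ba=0$, while property $\mathbb{B}$ says that every functional annihilating the tensors $a\otimes b$ with $ab=0$ also annihilates the associator tensors $ab\otimes c-a\otimes bc$. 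The target is that the trilinear form
\[
\Theta(a,b,c)=\varphi(ab,c)-\varphi(b,ca)+\varphi(bc,a)
\]
descends to a functional of the single product $abc$. I would establish the requisite balancedness relations for $\Theta$ in each slot by feeding suitable auxiliary functionals into \eqref{B11}; the crux is to manufacture, out of the two-sided annihilation of $\varphi$ together with the approximate identity, genuine one-sided zero-product relations on which property $\mathbb{B}$ can act. Once $\Theta$ is known to factor through $(a,b,c)\mapsto abc$, Cohen's factorization theorem (which gives $A=A^3$) together with the continuity of $\Theta$ produces a well-defined $\sigma\in A^*$ with $\Theta(a,b,c)=\sigma(abc)$.

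Granting the $\sigma$-identity, the $\tau$-identity is easy. Let $(e_\lambda)$ be the bounded approximate identity and substitute $b=e_\lambda$ into \eqref{eqfifi}, obtaining $\varphi(ae_\lambda,c)-\varphi(e_\lambda,ca)+\varphi(e_\lambda c,a)=\sigma(ae_\lambda c)$. Passing to a subnet for which $(e_\lambda)$ converges weak-$*$ to some $E\in A^{**}$ and using $ae_\lambda\to a$, $e_\lambda c\to c$, $ae_\lambda c\to ac$ together with the continuity of $\varphi$ and $\sigma$, the middle term converges to $\rho(ca)$, where $\rho\in A^*$ is defined by $\rho(y)=E\bigl(\varphi(\cdot,y)\bigr)$; this is a genuine element of $A^*$ because $y\mapsto\varphi(\cdot,y)$ is a continuous map $A\to A^*$ and $E$ is bounded. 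In the limit we get
\[
\varphi(a,c)+\varphi(c,a)=\sigma(ac)+\rho(ca)\qquad(a,c\in A).
\]
Since the left-hand side is symmetric in $a$ and $c$, interchanging $a$ and $c$ and adding the two identities yields $2\bigl(\varphi(a,c)+\varphi(c,a)\bigr)=(\sigma+\rho)(ac+ca)$, so that $\tau:=\tfrac12(\sigma+\rho)\in A^*$ satisfies $\varphi(a,c)+\varphi(c,a)=\tau(a\circ c)$, as required.

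The main obstacle is the first step: bridging the gap between the two-sided hypothesis $ab=ba=0$ and the one-sided trigger $ab=0$ demanded by property $\mathbb{B}$, i.e.\ finding the right auxiliary functionals (and the right use of the approximate identity) so that \eqref{B11} becomes applicable and assembles into the balancedness of $\Theta$. By contrast, the extraction of the actual functionals $\sigma,\tau\in A^*$ is routine functional analysis: weak-$*$ compactness of the bidual ball, Cohen factorization, and density of $A^2$ and $A^3$ in $A$.
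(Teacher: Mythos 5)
Your derivation of the $\tau$-identity from \eqref{eqfifi} is complete and correct: substituting $b=e_\lambda$, extracting a weak-$*$ cluster point to define $\rho(y)=E\bigl(\varphi(\cdot,y)\bigr)$, and symmetrizing is precisely the ``easy part concerning $\tau$'' that the paper notes is missing from \cite[Lemma 4.1]{ABEVi} (the paper itself does not reprove the lemma but cites \cite[Theorem 2.2]{ABEVJordan} and \cite[Lemma 4.1]{ABEVi} for the substantive $\sigma$-statement). The problem is exactly that substantive half: for \eqref{eqfifi} you offer only a plan, and you explicitly leave its crux unresolved --- you say one must ``manufacture \dots genuine one-sided zero-product relations on which property $\mathbb{B}$ can act'' and that the obstacle is ``finding the right auxiliary functionals'', without exhibiting either. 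The tensor-product reformulation does not advance this, and the closing appeal to Cohen factorization has its own wrinkle: a functional defined through factorizations $x=abc$ is not obviously well defined, linear, or bounded before the very balancedness you have not proved. So, as written, the proposal contains no proof of \eqref{eqfifi}.

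The missing bridge is short, and it is the engine of the cited proof. Fix $a,b\in A$ with $ab=0$ and set $\psi(x,y)=\varphi(bx,ya)$. If $xy=0$, then $(bx)(ya)=b(xy)a=0$ \emph{and} $(ya)(bx)=y(ab)x=0$, so the two-sided hypothesis gives $\psi(x,y)=0$; thus $\psi$ vanishes on one-sided zero products and \eqref{B11} yields $\varphi(bxy,za)=\varphi(bx,yza)$ for all $x,y,z$. Taking $x=e_\lambda$ and letting $\lambda\to\infty$ gives
\[
ab=0\ \Longrightarrow\ \varphi(by,za)=\varphi(b,yza)\quad (y,z\in A).
\]
Hence, for fixed $y,z$, the continuous bilinear functional $(a,b)\mapsto\varphi(by,za)-\varphi(b,yza)$ itself vanishes on one-sided zero products, and a second application of property $\mathbb{B}$ gives $\varphi(cy,zab)-\varphi(c,yzab)=\varphi(bcy,za)-\varphi(bc,yza)$ for all $a,b,c,y,z$. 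Setting $z=e_\lambda$ yields $\varphi(cy,ab)-\varphi(c,yab)=\varphi(bcy,a)-\varphi(bc,ya)$, and finally setting $a=e_\lambda$ along a subnet for which $\sigma(x):=\lim\varphi(x,e_\lambda)$ exists (a weak-$*$ limit, so $\sigma\in A^*$ with linearity and boundedness for free --- no factorization argument is needed) gives
\[
\varphi(cy,b)-\varphi(c,yb)+\varphi(bc,y)=\sigma(bcy)\quad (b,c,y\in A),
\]
which is exactly \eqref{eqfifi} after the relabeling $(c,y,b)\mapsto(a,b,c)$. This double use of property $\mathbb{B}$, interleaved with approximate-identity limits, is the idea your outline stalls without, precisely at the point you flag as the main obstacle.
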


In the next lemma we take the conclusion of Lemma~\ref{l1} as an assumption. But first, recall that given any Banach algebra $A$, the dual space $A^*$ of $A$ becomes a Banach $A$-bimodule by setting
\[
(a\cdot f)(c)= f(ca),\quad (f\cdot a)(c)= f(ac) \quad (a,c\in A,\ f\in A^*).
\]

\begin{lemma}\label{l2}
Let $A$ be any Banach algebra. If a skew-symmetric continuous bilinear map $\varphi\colon A\times A\to \mathbb C$ satisfies~\eqref{eqfifi} for some   $\sigma\in A^*$, then the map $\delta\colon A\to A^*$ defined by
\[
\delta(a)(c) =\varphi(a,c) + \frac{1}{2}\sigma(a\circ c)
\]
is a continuous derivation.
\end{lemma}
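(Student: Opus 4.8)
The plan is to verify directly that $\delta$ satisfies the derivation identity $\delta(ab)=a\cdot\delta(b)+\delta(a)\cdot b$ in the bimodule $A^*$, by evaluating both sides at an arbitrary $c\in A$ and reducing the required equality to the hypothesis~\eqref{eqfifi}. Unwinding the bimodule actions gives $(a\cdot\delta(b))(c)=\delta(b)(ca)$ and $(\delta(a)\cdot b)(c)=\delta(a)(bc)$, so after substituting the definition of $\delta$ and expanding each term $\frac{1}{2}\sigma(x\circ y)$, the identity to be proved collapses to
\[
\varphi(ab,c)-\varphi(b,ca)-\varphi(a,bc)=\sigma(bca)\qquad(a,b,c\in A).
\]
I would present this reduction compactly: the terms $\frac{1}{2}\sigma(abc)$ and $\frac{1}{2}\sigma(cab)$ appear on both sides and cancel, leaving only $\sigma(bca)$ on the right and the three displayed values of $\varphi$.

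Next I would use skew-symmetry, $\varphi(a,bc)=-\varphi(bc,a)$, to rewrite the left-hand side of the displayed equation as $\varphi(ab,c)-\varphi(b,ca)+\varphi(bc,a)$, which by~\eqref{eqfifi} equals $\sigma(abc)$. Thus the entire problem reduces to showing that $\sigma$ is \emph{cyclic}, i.e. $\sigma(abc)=\sigma(bca)$ for all $a,b,c\in A$.

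The main obstacle, and the crux of the argument, is establishing precisely this cyclicity, which is not assumed as a hypothesis. The key observation is that skew-symmetry turns~\eqref{eqfifi} into a cyclically symmetric statement: writing $-\varphi(b,ca)=\varphi(ca,b)$, hypothesis~\eqref{eqfifi} becomes
\[
\varphi(ab,c)+\varphi(bc,a)+\varphi(ca,b)=\sigma(abc).
\]
The left-hand side is manifestly invariant under the cyclic permutation $(a,b,c)\mapsto(b,c,a)$, so the right-hand side must be invariant as well; hence $\sigma(abc)=\sigma(bca)$. Feeding this back into the reduction of the previous paragraph closes the computation and proves that $\delta$ is a derivation.

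Finally I would record continuity, which is routine: since $\varphi$ is a continuous bilinear functional and $\Vert a\circ c\Vert\le 2\Vert a\Vert\,\Vert c\Vert$, one has $\lvert\delta(a)(c)\rvert\le(\Vert\varphi\Vert+\Vert\sigma\Vert)\Vert a\Vert\,\Vert c\Vert$, so each $\delta(a)$ lies in $A^*$ and $\Vert\delta(a)\Vert\le(\Vert\varphi\Vert+\Vert\sigma\Vert)\Vert a\Vert$, whence $\delta\colon A\to A^*$ is a continuous linear map.
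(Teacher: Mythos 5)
Your proposal is correct and takes essentially the same route as the paper's proof: both use skew-symmetry to rewrite~\eqref{eqfifi} in the cyclically invariant form $\varphi(ab,c)+\varphi(ca,b)+\varphi(bc,a)=\sigma(abc)$, deduce from this the key fact $\sigma(abc)=\sigma(bca)$, and then verify the derivation identity by evaluating both sides at $c$ and cancelling the common $\sigma$-terms. The only difference is one of organization---you reduce the derivation identity to an equation and then invoke the hypothesis, whereas the paper computes $\delta(ab)(c)$ forward from the definition---and this is immaterial.
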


\begin{proof}
It is clear that $\delta$ is linear and continuous. We must show that it satisfies
\begin{equation}\label{der}
\delta(ab)=\delta(a)\cdot b + a\cdot\delta(b)\quad (a,b\in A).
\end{equation}
We begin by noticing that~\eqref{eqfifi} can be written as
\begin{equation} \label{e23}
\sigma(abc)=\varphi(ab,c)+\varphi(ca,b)+\varphi(bc,a) \quad (a,b,c\in A)\end{equation}
for $\varphi$ is skew-symmetric.
Note that this immediately yields
\begin{equation} \label{sig}
\sigma(abc)= \sigma(bca) \quad (a,b,c\in A).
\end{equation}
Now take arbitrary $a,b,c\in A$. We have
\[
\delta(ab)(c) = \varphi(ab,c) +\frac{1}{2}\sigma(ab\circ c).
\]
From~\eqref{e23} it follows that
\[
\delta(ab)(c) = - \varphi(ca,b) - \varphi(bc,a) + \sigma(abc)  +\frac{1}{2}\sigma(ab c + cab),
\]
and hence, in view of~\eqref{sig} and the skew-linearity of $\varphi$,
\begin{equation}\label{abc}
\delta(ab)(c) =  \varphi(b,ca) + \varphi(a,bc) + \sigma(bca) +\frac{1}{2}\sigma(ab c + cab).
\end{equation}
Now compute the right-hand side of~\eqref{der}:
\begin{align*}
\big(\delta(a)\cdot b + a\cdot\delta(b)\big)(c)&= \delta(a)(bc) +
\delta(b)(ca)\\
&=\varphi(a,bc) + \frac{1}{2}\sigma(a\circ bc) + \varphi(b,ca) +\frac{1}{2}\sigma(b\circ ca).
\end{align*}
Comparing this with~\eqref{abc}, we see that~\eqref{der} indeed holds.
\end{proof}

As an easy consequence of Lemmas~\ref{l1} and~\ref{l2} we can now derive the following theorem.

\begin{theorem}\label{t1}
Let $A$ be a  weakly amenable Banach algebra with property $\mathbb{B}$ and having a bounded approximate identity.
If a continuous bilinear functional $\varphi \colon A\times A\to\mathbb{C}$ satisfies
\begin{equation*}
a,b\in A, \ ab=ba=0 \ \Longrightarrow \ \varphi(a,b)=0,
\end{equation*}
then  there exist $\tau_1,\tau_2\in A^*$ such that
\[
\varphi(a,b)= \tau_1(ab) + \tau_2(ba)\quad (a,b\in A).
\]
\end{theorem}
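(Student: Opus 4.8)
The plan is to combine Lemmas~\ref{l1} and~\ref{l2} via a symmetric/skew-symmetric decomposition of $\varphi$, and then to invoke weak amenability to trivialize the derivation that Lemma~\ref{l2} produces. The hypothesis on $\varphi$ is exactly the hypothesis of Lemma~\ref{l1}, so both conclusions of that lemma are available: there is $\sigma\in A^*$ with $\varphi(ab,c)-\varphi(b,ca)+\varphi(bc,a)=\sigma(abc)$, and there is $\tau\in A^*$ with $\varphi(a,b)+\varphi(b,a)=\tau(a\circ b)$ for all $a,b,c\in A$.

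First I would split $\varphi$ into its symmetric and skew-symmetric parts, writing $\varphi=\varphi_s+\varphi_a$ where $\varphi_s(a,b)=\tfrac12\bigl(\varphi(a,b)+\varphi(b,a)\bigr)$ and $\varphi_a(a,b)=\tfrac12\bigl(\varphi(a,b)-\varphi(b,a)\bigr)$. The second conclusion of Lemma~\ref{l1} immediately identifies the symmetric part as $\varphi_s(a,b)=\tfrac12\tau(a\circ b)=\tfrac12\tau(ab)+\tfrac12\tau(ba)$, which is already of the desired shape $\tau_1(ab)+\tau_2(ba)$. The remaining work is therefore entirely about the skew-symmetric part $\varphi_a$. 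I would check that $\varphi_a$ still satisfies~\eqref{eqfifi}, with the same $\sigma$ replaced by $\tfrac12\sigma$; this should follow by symmetrizing identity~\eqref{eqfifi}, since the combination $\varphi(ab,c)-\varphi(b,ca)+\varphi(bc,a)$ is compatible with the skew part after a short manipulation using~\eqref{sig}. (Concretely, one writes~\eqref{eqfifi} for $\varphi$, applies it again after permuting the roles to extract the skew component, and uses that the symmetric part contributes a term expressible through $\tau$ and $\sigma$.)

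Next I would apply Lemma~\ref{l2} to $\varphi_a$, which is skew-symmetric and satisfies~\eqref{eqfifi} with the appropriate functional $\sigma'$. The lemma yields a continuous derivation $\delta\colon A\to A^*$ given by $\delta(a)(c)=\varphi_a(a,c)+\tfrac12\sigma'(a\circ c)$. Here is where weak amenability enters: since $A$ is weakly amenable, $\delta$ is inner, so there exists $\xi\in A^*$ with $\delta(a)=a\cdot\xi-\xi\cdot a$, i.e.\ $\delta(a)(c)=\xi(ac)-\xi(ca)$ for all $a,c\in A$. Substituting this back gives $\varphi_a(a,c)=\xi(ac)-\xi(ca)-\tfrac12\sigma'(ac)-\tfrac12\sigma'(ca)$, which is again a linear combination of a functional evaluated at $ac$ and a functional evaluated at $ca$. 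Adding the symmetric and skew-symmetric contributions and collecting the coefficients of $ab$ and of $ba$ then produces functionals $\tau_1,\tau_2\in A^*$ with $\varphi(a,b)=\tau_1(ab)+\tau_2(ba)$, as required.

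The main obstacle I anticipate is the bookkeeping in the second paragraph: verifying that the skew-symmetric part $\varphi_a$ genuinely inherits an identity of the form~\eqref{eqfifi} so that Lemma~\ref{l2} applies. The symmetric part does not obviously satisfy~\eqref{eqfifi} on its own, so one must be careful to use the explicit formula $\varphi_s(a,b)=\tfrac12\tau(a\circ b)$ to compute its contribution to the left-hand side of~\eqref{eqfifi} and absorb it into a modified functional $\sigma'$. Once this is done correctly, the remaining steps are the formal application of Lemma~\ref{l2} and of weak amenability, both of which are immediate. A secondary technical point is ensuring the resulting map is genuinely a \emph{continuous} derivation so that weak amenability is applicable, but this is guaranteed by Lemma~\ref{l2} together with the continuity of $\varphi$ and $\sigma'$.
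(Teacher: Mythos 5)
Your proposal is correct and follows the paper's overall pipeline exactly: decompose $\varphi$ into symmetric and skew-symmetric parts, dispose of the symmetric part via the $\tau$-conclusion of Lemma~\ref{l1}, and treat the skew part via Lemma~\ref{l2} plus weak amenability. The one step you flagged as the main obstacle is, however, handled differently (and more laboriously) than in the paper. The paper's shortcut is to observe that the hypothesis $ab=ba=0\Rightarrow\varphi(a,b)=0$ is symmetric under swapping $a$ and $b$, so \emph{each} of $\varphi_s$ and $\varphi_a$ satisfies it, and Lemma~\ref{l1} can simply be applied to $\varphi_a$ directly to produce a functional satisfying~\eqref{eqfifi} --- no bookkeeping at all. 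Your computational route also works, but your guessed constant is wrong: using $\varphi_s(a,b)=\tfrac12\tau(ab+ba)$ one gets
\[
\varphi_s(ab,c)-\varphi_s(b,ca)+\varphi_s(bc,a)
=\tfrac12\tau\bigl(abc+cab-bca-cab+bca+abc\bigr)=\tau(abc),
\]
so $\varphi_a$ satisfies~\eqref{eqfifi} with $\sigma'=\sigma-\tau$, not $\tfrac12\sigma$; since your final step absorbs $\sigma'$ into $\tau_1,\tau_2$ anyway, this slip is harmless, but it should be fixed. Two further small points. First, with the paper's module conventions $(a\cdot\xi)(c)=\xi(ca)$ and $(\xi\cdot a)(c)=\xi(ac)$, innerness reads $\delta(a)(c)=\xi(ca)-\xi(ac)$, the opposite sign of what you wrote; again inessential, as it amounts to replacing $\xi$ by $-\xi$. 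Second, after invoking innerness the paper compares the skew-symmetric functional $(a,c)\mapsto\omega(ca-ac)-\varphi_a(a,c)$ with the symmetric functional $(a,c)\mapsto\tfrac12\sigma'(a\circ c)$ and concludes both vanish, obtaining the cleaner statement $\varphi_a(a,c)=\omega(ca-ac)$; your move of simply absorbing the term $\tfrac12\sigma'(a\circ c)=\tfrac12\sigma'(ac)+\tfrac12\sigma'(ca)$ into $\tau_1$ and $\tau_2$ is equally valid for the stated conclusion and skips that comparison, since the theorem does not require the skew part to be a commutator functional. In short: the argument is sound, it buys a self-contained verification in place of the paper's one-line symmetry observation, and only the explicit constant $\sigma'$ and one sign need correcting.
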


\begin{proof}
Define $\varphi_1,\varphi_2\colon A\times A\to\mathbb C$ by
\[
\varphi_1(a,b) = \frac{1}{2}\big(\varphi(a,b) + \varphi(b,a)\big),\quad \varphi_2(a,b) = \frac{1}{2}\big(\varphi(a,b) - \varphi(b,a)\big).
\]
Note that $\varphi = \varphi_1 + \varphi_2$, and each $\varphi_i$ satisfies the condition of the theorem, i.e., $ab=ba=0$ implies $\varphi_i(a,b)=0$. Since $\varphi_1$ is symmetric it follows from Lemma~\ref{l1} that
\begin{equation}\label{mm}
2\varphi_1(a,b)=\tau(a\circ b)\quad (a,b\in A)
\end{equation}
for some $\tau\in A^*$.  It remains to consider $\varphi_2$ which is  skew-symmetric.  By Lemma~\ref{l2} there exist $\sigma\in A^*$ and  a continuous derivation $\delta\colon A\to A ^*$ such that
\[
\delta(a)(c)= \varphi_2(a,c) + \frac{1}{2}\sigma(a\circ c) \quad (a,c\in A).
\]
Since $A$ is weakly amenable, $\delta$ is inner. That is, $\delta(a)=a\cdot \omega - \omega\cdot a$ for some $\omega\in A^*$, and hence
\[
\omega(ca-ac)-  \varphi_2(a,c) = \frac{1}{2}\sigma(a\circ c)\quad (a,c\in A).
\]
Viewing this expression as a bilinear functional on $A\times A$,  we see that the left-hand side is skew-symmetric and the right-hand side is symmetric. Therefore, both sides are zero.
Thus \begin{equation}\label{mm2}\varphi_2(a,c)= \omega(ca-ac)\quad (a,c\in A).\end{equation}
From~\eqref{mm} and~\eqref{mm2} we readily get the desired conclusion (with $\tau_1 = \frac{\tau}{2} - \omega$ and $\tau_2 = \frac{\tau}{2} + \omega$).
 \end{proof}

\begin{corollary}\label{wa}
Let $A$ be  a weakly amenable Banach algebra $A$ with property $\mathbb{B}$ and having  a bounded approximate identity.
Then $A$ is  a zero Lie product determined Banach algebra.
\end{corollary}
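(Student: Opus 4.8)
The plan is to deduce this directly from Theorem~\ref{t1}. Let $\varphi\colon A\times A\to\mathbb{C}$ be a continuous bilinear functional satisfying the zero Lie product condition~\eqref{B}, that is, $\varphi(a,b)=0$ whenever $[a,b]=0$. First I would observe that this hypothesis subsumes the hypothesis of Theorem~\ref{t1}: indeed, if $ab=ba=0$ then in particular $ab=ba$, so $[a,b]=0$ and hence $\varphi(a,b)=0$. Theorem~\ref{t1} therefore applies and yields $\tau_1,\tau_2\in A^*$ with
\[
\varphi(a,b)=\tau_1(ab)+\tau_2(ba)\qquad(a,b\in A).
\]

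The remaining task is to exploit the full strength of~\eqref{B} to replace this expression by one of the form $\omega([a,b])$. Put $\tau=\tau_1+\tau_2$. Whenever $a$ and $b$ commute we have $ab=ba$, so that $\varphi(a,b)=\tau_1(ab)+\tau_2(ab)=\tau(ab)$; since $[a,b]=0$ forces $\varphi(a,b)=0$, this gives $\tau(ab)=0$ for every commuting pair $a,b$. Taking $b=a$ yields $\tau(a^2)=0$ for all $a\in A$, and then the standard polarization $a\mapsto a+b$ gives $\tau(a\circ b)=\tau(ab+ba)=0$ for all $a,b\in A$.

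Finally I would set $\omega=\tfrac{1}{2}(\tau_1-\tau_2)\in A^*$, so that $\tau_1=\tfrac{1}{2}\tau+\omega$ and $\tau_2=\tfrac{1}{2}\tau-\omega$. Substituting back,
\[
\varphi(a,b)=\tfrac{1}{2}\tau(ab+ba)+\omega(ab-ba)=\tfrac{1}{2}\tau(a\circ b)+\omega([a,b]).
\]
Since $\tau(a\circ b)=0$ for all $a,b\in A$, this reduces to $\varphi(a,b)=\omega([a,b])$, which is exactly~\eqref{Bl}; hence $A$ is zero Lie product determined.

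There is essentially no hard step here: all the analytic content is packaged into Theorem~\ref{t1}, and what remains is the elementary algebraic manipulation of decomposing the bilinear functional into its symmetric and skew-symmetric parts. The only point to watch is the polarization identity, where one must use that $\tau$ annihilates every square in order to conclude that it annihilates each symmetric product $a\circ b$; this is what forces the symmetric contribution $\tfrac{1}{2}\tau(a\circ b)$ to vanish and leaves precisely the Lie term $\omega([a,b])$.
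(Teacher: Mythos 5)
Your proof is correct and takes essentially the same route as the paper: both reduce the statement to Theorem~\ref{t1} and then eliminate the symmetric contribution by elementary algebra. Your observation that $\tau=\tau_1+\tau_2$ annihilates all squares (hence, by polarization, all Jordan products $a\circ b$) is just an unpacking of the paper's remark that~\eqref{B} forces $\varphi$ to be skew-symmetric, so there is no gap and nothing genuinely different to compare.
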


\begin{proof}
If $\varphi$ satisfies~\eqref{B}, then it is skew-symmetric
and satisfies the assumption of Theorem~\ref{t1}. This  clearly
implies that $\varphi$ is of the desired form.
\end{proof}

Every $C^*$-algebra satisfies all the assumptions of Corollary~\ref{wa}.
We can thus state  the following corollary which was derived
in~\cite{ABEVi} from Goldstein's theorem~\cite{G}.

\begin{corollary}\label{wac} Every $C^*$-algebra is a  zero Lie
product determined Banach algebra.
\end{corollary}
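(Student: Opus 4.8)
The plan is simply to verify that every $C^*$-algebra satisfies all three hypotheses of Corollary~\ref{wa}, after which the conclusion follows at once. Thus the task reduces entirely to checking, for an arbitrary $C^*$-algebra $A$, that it is (i) weakly amenable, (ii) has property $\mathbb{B}$, and (iii) possesses a bounded approximate identity.

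First I would dispose of condition (iii), which is classical: every $C^*$-algebra admits an approximate identity consisting of an increasing net of positive elements of norm at most $1$, so in particular it has a bounded approximate identity and no further argument is needed. Condition (ii), property $\mathbb{B}$, was established for $C^*$-algebras in~\cite{ABEV0}, as already recorded in the discussion following~\eqref{B11}. Finally, condition (i) is Haagerup's theorem on the weak amenability of $C^*$-algebras, which was recalled at the beginning of this section as~\cite[Theorems 5.6.48 and 5.6.77]{D}. With all three hypotheses in hand, Corollary~\ref{wa} applies verbatim and gives that $A$ is a zero Lie product determined Banach algebra.

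The main point to stress is that all the substantive content has already been discharged before we reach this corollary: the genuinely deep inputs are Haagerup's weak amenability theorem and the verification of property $\mathbb{B}$ in~\cite{ABEV0}, while the abstract machinery linking these two properties to the zero Lie product determined property is exactly Theorem~\ref{t1} together with Corollary~\ref{wa}. Consequently there is no real obstacle at this stage; the only thing to be mindful of is that weak amenability of $C^*$-algebras is a nontrivial external result rather than something one could hope to reprove here. It is worth noting, as indicated in the surrounding text, that this establishes by a self-contained route the fact originally obtained in~\cite{ABEVi} as a consequence of Goldstein's theorem~\cite{G}.
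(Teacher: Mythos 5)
Your proposal is correct and matches the paper's own derivation exactly: the paper states that every $C^*$-algebra satisfies all the assumptions of Corollary~\ref{wa} (weak amenability via~\cite[Theorems 5.6.48 and 5.6.77]{D}, property $\mathbb{B}$ via~\cite{ABEV0}, and the classical bounded approximate identity), and your verification of these three hypotheses is precisely that argument. The only point worth noting is that the paper also offers, in Section~\ref{s3}, an independent second route via Johnson's theorem on Jordan derivations and Corollary~\ref{rtja}, but that is not the proof of this corollary and your approach is the intended one.
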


Another type of a Banach algebra satisfying all the assumptions
of Corollary~\ref{wa} is $L ^1(G)$ where $G$ is any locally compact
group. Hence we have the following corollary. It is a substantial
generalization of~\cite[Theorem~4.5]{ABEVi} which covers only
amenable groups.

\begin{corollary}\label{wag} Let $G$ be a locally compact group. Then the group
algebra $L ^1(G)$ is a zero Lie product determined Banach algebra.
\end{corollary}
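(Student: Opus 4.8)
The plan is to apply Corollary~\ref{wa}, so the entire task reduces to verifying that $L^1(G)$ satisfies its three hypotheses: weak amenability, property $\mathbb{B}$, and the existence of a bounded approximate identity. Two of these are already on record in the preceding discussion and need nothing new here. Weak amenability of $L^1(G)$ for an \emph{arbitrary} locally compact group $G$ is supplied by~\cite[Theorems 5.6.48 and 5.6.77]{D}, and property $\mathbb{B}$ for every such $G$ was established in~\cite{ABEV0}, as recalled in the Introduction. So the only ingredient I would actually spell out is the bounded approximate identity.

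For that, I would invoke the classical construction. Fixing a neighborhood basis $(U_i)$ of the identity $e\in G$ consisting of relatively compact symmetric open sets, directed by reverse inclusion, and writing $e_i = \mu(U_i)^{-1}\chi_{U_i}$ where $\mu$ is a left Haar measure, one obtains a net in $L^1(G)$ with $\Vert e_i\Vert_1 = 1$ for every $i$ and $\Vert e_i * f - f\Vert_1\to 0$, $\Vert f * e_i - f\Vert_1\to 0$ for each $f\in L^1(G)$. Thus $L^1(G)$ has a two-sided bounded approximate identity of bound $1$. This is standard (see~\cite{D}), so I would state it rather than re-derive it. With all three hypotheses confirmed, Corollary~\ref{wa} at once yields that $L^1(G)$ is a zero Lie product determined Banach algebra.

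There is essentially no obstacle left at this stage, since the whole difficulty has been absorbed into the two cited inputs; the proof is a verification rather than a fresh argument. The point worth emphasizing is where the genuine strength lies. The earlier result~\cite[Theorem~4.5]{ABEVi} was confined to \emph{amenable} $G$, whereas the present corollary removes that restriction entirely. This is possible precisely because Corollary~\ref{wa} requires only weak amenability, and weak amenability of $L^1(G)$ holds for \emph{every} locally compact group by~\cite{D}. Hence the generalization comes for free once Theorem~\ref{t1} is in hand, the substantial external input being the weak amenability theorem rather than anything internal to this proof.
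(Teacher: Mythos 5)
Your proposal is correct and follows exactly the paper's route: the paper likewise derives the corollary by checking that $L^1(G)$ satisfies the three hypotheses of Corollary~\ref{wa}, with weak amenability from~\cite[Theorems 5.6.48 and 5.6.77]{D}, property $\mathbb{B}$ from~\cite{ABEV0}, and the standard bounded approximate identity. Your explicit verification of the approximate identity and your remark on where the generalization beyond amenable groups comes from are both accurate and consistent with the paper's (essentially one-line) argument.
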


\begin{corollary}
Let $X$ be a Banach space, and $1\le p\le\infty$.
Then $\mathcal{B}(\ell^p(X))$ is a zero Lie product determined Banach algebra.
In particular, $\mathcal{B}(\ell^p)$ is a zero Lie product determined Banach algebra.
\end{corollary}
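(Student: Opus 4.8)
The plan is to exhibit $\mathcal{B}(\ell^p(X))$ as an instance of Corollary~\ref{wa}, so that it suffices to check, for the algebra $A=\mathcal{B}(\ell^p(X))$, the three hypotheses of that corollary: property $\mathbb{B}$, the existence of a bounded approximate identity, and weak amenability. Once all three are in place, Corollary~\ref{wa} immediately yields that $A$ is zero Lie product determined, and the \emph{in particular} clause is merely the special case $X=\mathbb{C}$, for which $\ell^p(X)=\ell^p$ and requires no separate argument.

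Two of the three hypotheses are already at hand. Property $\mathbb{B}$ is exactly the assertion of Remark~\ref{r903}, which is deduced from the isomorphism $\ell^p(X)\cong\ell^p(X)\oplus\ell^p(X)$ together with Proposition~\ref{pr01}(2). A bounded approximate identity is even cheaper: $\mathcal{B}(\ell^p(X))$ is unital, the identity operator serving as a two-sided unit, and a unital algebra trivially possesses a bounded approximate identity. Thus no real work is required for these two points, and the corollary will follow the moment weak amenability has been secured.

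The remaining and decisive hypothesis, which I expect to be the main obstacle, is the weak amenability of $\mathcal{B}(\ell^p(X))$, i.e. the statement that every continuous derivation $D\colon\mathcal{B}(\ell^p(X))\to\mathcal{B}(\ell^p(X))^*$ is inner. I would not attempt this from first principles but would instead appeal to the established theory of the first cohomology $H^1(\mathcal{B}(E),\mathcal{B}(E)^*)$ for Banach spaces $E$ that are isomorphic to their own $\ell^p$-sums. The relevant structural feature is the self-similarity $\ell^p(\ell^p(X))\cong\ell^p(X)$, arising from a bijection $\mathbb{N}\times\mathbb{N}\cong\mathbb{N}$ and valid for all $1\le p\le\infty$, which supplies precisely the kind of internal symmetry under which weak amenability of $\mathcal{B}(E)$ is known to hold; for background on weak amenability and such computations see~\cite{D}. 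This is the delicate step, since weak amenability of $\mathcal{B}(E)$ is sensitive to the geometry of $E$ and can fail for general Banach spaces, so the hypothesis $E=\ell^p(X)$ must be used in an essential way to reconstruct an arbitrary derivation in the inner form $a\mapsto a\cdot\omega-\omega\cdot a$. With weak amenability established, property $\mathbb{B}$ and the bounded approximate identity place $\mathcal{B}(\ell^p(X))$ in the scope of Corollary~\ref{wa}, which delivers the zero Lie product determined property in one line.
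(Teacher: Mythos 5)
Your overall route is exactly the paper's: verify property $\mathbb{B}$ via Proposition~\ref{pr01}(2) and Remark~\ref{r903}, note that the algebra is unital (hence has a bounded approximate identity), establish weak amenability, and then invoke Corollary~\ref{wa}; the paper's proof is these same three lines. The first two hypotheses are handled correctly and identically to the paper.

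The one deficiency is the decisive step, which you yourself flag as the main obstacle but then leave unsecured: you assert that weak amenability of $\mathcal{B}(E)$ ``is known to hold'' for spaces with the self-similarity $\ell^p(\ell^p(X))\cong\ell^p(X)$, citing only~\cite{D} for background. That reference cannot carry the claim: Dales's monograph (2000) predates the relevant theorem and contains no such result, and weak amenability of $\mathcal{B}(E)$ is, as you note, genuinely sensitive to the geometry of $E$, so a vague appeal to ``established theory'' is a real gap in a written proof. The fact you need is precisely~\cite[Proposition 3.2]{Bla2} (A.~Blanco, \emph{On the weak amenability of $\mathcal{B}(X)$}, Studia Math. 196 (2010)), which is the reference the paper uses and whose hypothesis is exactly the self-similarity condition you identified. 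So your structural instinct is correct --- the isomorphism $\ell^p(\ell^p(X))\cong\ell^p(X)$, valid for all $1\le p\le\infty$ via a bijection $\mathbb{N}\times\mathbb{N}\cong\mathbb{N}$, is what makes Blanco's theorem applicable --- and replacing the handwave by this citation turns your proposal into the paper's proof verbatim; without it, the argument as written does not close.
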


\begin{proof}
Proposition~\ref{pr01} and Remark~\ref{r903}
show that $\mathcal{B}(\ell^p(X))$ has property $\mathbb{B}$.
Further,~\cite[Proposition 3.2]{Bla2} shows that $\mathcal{B}(\ell^p(X))$ is weakly amenable.
Corollary~\ref{wa} establishes our assertion.
\end{proof}

\section{Cyclic amenability and Jordan derivations}\label{s3}
Let $A$ be a Banach algebra.
A derivation $D\colon A\to A^*$ is said to be \emph{cyclic} if
\[
D(a)(b)=-D(b)(a) \quad (a,b\in A).
\]
Clearly inner derivations are cyclic. The algebra $A$ is said to be \emph{cyclically amenable}
if every continuous cyclic derivation $D\colon A\to A^*$ is inner.
For an account of this notion we refer the reader to~\cite{Gr1}.
Of course, weak amenability implies cyclic amenability.

Let $A$ be a unital Banach algebra that is unitally polynomially generated by a single element $x$,
and let $D\colon A\to A^*$ be a continuous cyclic derivation.
We have
\[
D(\mathbf{1})=D(\mathbf{1}^2)=2\bigl(\mathbf{1}\cdot D(\mathbf{1})\bigr)=2D(\mathbf{1}),
\]
which implies that $D(\mathbf{1})=0$.
Further, $D(x)(\mathbf{1})=-D(\mathbf{1})(x)=0$ and, for $n\in\mathbb{N}$,
\[
D(x)(x^n)=-D(x^n)(x)=-\bigl(nx^{n-1}\cdot D(x)\bigr)(x)=-nD(x)(x^n),
\]
which yields $D(x)(x^n)=0$. This clearly implies that
$D(x)(p(x))=0$ for each polynomial $p$ in one indeterminate,
and hence $D(x)(a)=0$ for each $a\in A$.
On the other hand, for each polynomial $p$ and each $a\in A$,
we have
\[
D\bigl(p(x)\bigr)(a)=
\bigl(p'(x)\cdot D(x)\bigr)(a)=
D(x)\bigl(p'(x)a\bigr)=0,
\]
and hence $D(b)(a)=0$ for each $b\in A$. Consequently, every cyclic
derivation $D\colon A\to A^*$ is zero and therefore $A$ is cyclically
amenable. Some relevant examples of unital Banach algebras that are
unitally polynomially generated by a single element are the disc algebra
$A(\mathbb{D})$, $C^n([a,b])$, and the algebra $AC([a,b])$ of absolutely
continuous functions on the interval $[a,b]$ (see~\cite{D}).
It should be pointed out that none of these Banach algebras is weakly amenable.
The Banach algebras $A(\mathbb{D})$ and $C^n([a,b])$ obviously have
non-zero, continuous point derivations and so they are not weakly amenable
(see~\cite[Theorem~2.8.63]{D}). There are no non-zero
continuous point derivations on $AC([a,b])$ but nevertheless this
Banach algebra is not weakly amenable~\cite[Theorem~5.6.8]{D}.
The semigroup algebra $\ell^1(\mathbb{S}_X)$ of the free semigroup
$\mathbb{S}_X$ generated by any set
$X$ is cyclically amenable~\cite{Gr1}. For the definition of the semigroup algebra we refer the reader to~\cite[Examples 2.1.13(iv),(v)]{D}.
However, the algebra $\ell^1(\mathbb{S}_X)$ is not essential.
For, if $x\in X$, then there are no elements $s,t\in\mathbb{S}_X$ with $st=x$,
which implies that $(a\star b)(x)=0$ for all $a,b\in\ell^1(\mathbb{S}_X)$.
From~\cite[Theorem 2.8.63(i)]{D} it follows that
the algebra $\ell^1(\mathbb{S}_X)$ is never weakly amenable, regardless of $X$.

Let $A$ be an algebra and $M$ an $A$-bimodule. For $a\in A$ and $m\in M$ we write $a\circ m = m\circ a = a\cdot m + m\cdot a$. Recall that a
 linear map $\delta\colon A\to M$  is called a Jordan derivation if it satisfies
\begin{equation}\label{j1}
\delta(a\circ b) = \delta(a)\circ b  + a \circ \delta(b)\quad (a,b\in A).
\end{equation}
By the usual polarisation procedure,~\eqref{j1} is equivalent to
\begin{equation}\label{j2}
\delta(a^2)=\delta(a)\cdot a+a\cdot\delta(a)\quad (a\in A).
\end{equation}
Taking into account that, for all $a$, $b\in A$ and $m\in M$,
\[
2aba=(a\circ b)\circ a-a^2\circ b
\]
and
\[
2a\cdot m\cdot a=(a\circ m)\circ a-a^2\circ m,
\]
it follows that every  Jordan derivation $\delta$ satifies
\begin{equation}\label{j3}
\delta(aba)=\delta(a)\cdot(ba)+a\cdot\delta(b)\cdot a+(ab)\cdot\delta(a).
\end{equation}

The question whether every Jordan derivation is actually a derivation has been studied by many authors in various contexts, starting in the 1950's.
Roughly speaking, more often than not the answer is positive (see, e.g.,~\cite{Br}). The following result is therefore of some interest.

\begin{theorem}\label{tja}
Let $A$ be a Banach algebra. If every continuous cyclic Jordan derivation from $A$
into $A^*$ is an inner derivation, then $A$ is a zero Lie product determined Banach algebra.
\end{theorem}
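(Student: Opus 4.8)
The plan is to take a continuous bilinear functional $\varphi\colon A\times A\to\mathbb{C}$ satisfying the defining implication~\eqref{B}, manufacture a continuous cyclic Jordan derivation from it, invoke the hypothesis to conclude that this derivation is inner, and read off the required representation~\eqref{Bl} from the inner form. First I would record two elementary consequences of~\eqref{B}. Setting $b=a$ gives $\varphi(a,a)=0$, and polarising this identity (expand $\varphi(a+b,a+b)=0$) shows that $\varphi$ is skew-symmetric, i.e. $\varphi(a,b)=-\varphi(b,a)$. Setting $b=a^2$, which commutes with $a$, gives $\varphi(a,a^2)=0$ for every $a\in A$.

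Next I would define $\delta\colon A\to A^*$ by $\delta(a)(c)=\varphi(a,c)$. This map is linear and continuous, and the skew-symmetry of $\varphi$ says precisely that $\delta(a)(c)=-\delta(c)(a)$, so $\delta$ is cyclic. The crucial point—and the step I expect to be the main obstacle—is to verify that $\delta$ is a Jordan derivation. By the polarised form~\eqref{j2} this amounts to the single identity $\varphi(a^2,c)=\varphi(a,a\circ c)$ for all $a,c\in A$, since a direct computation with the bimodule actions gives $\bigl(\delta(a)\cdot a+a\cdot\delta(a)\bigr)(c)=\varphi(a,ac)+\varphi(a,ca)=\varphi(a,a\circ c)$, while $\delta(a^2)(c)=\varphi(a^2,c)$. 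To obtain this identity I would linearise the relation $\varphi(a,a^2)=0$: replacing $a$ by $a+tb$ (legitimate, since $a+tb$ commutes with $(a+tb)^2$) yields a polynomial in $t\in\mathbb{C}$ that vanishes identically, whose constant and cubic coefficients are $\varphi(a,a^2)=0$ and $\varphi(b,b^2)=0$. Extracting the coefficient of $t$ then gives $\varphi(a,a\circ b)+\varphi(b,a^2)=0$, and combining this with skew-symmetry, $\varphi(b,a^2)=-\varphi(a^2,b)$, produces exactly the desired $\varphi(a^2,b)=\varphi(a,a\circ b)$.

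Finally I would apply the hypothesis of the theorem: $\delta$ is a continuous cyclic Jordan derivation, hence an inner derivation, so $\delta(a)=a\cdot\omega-\omega\cdot a$ for some $\omega\in A^*$. Evaluating at $c$ and using the definitions of the bimodule actions, $\delta(a)(c)=\omega(ca)-\omega(ac)$, so $\varphi(a,c)=\omega(ca-ac)=-\omega([a,c])$. Setting $\tilde\omega=-\omega\in A^*$ gives $\varphi(a,c)=\tilde\omega([a,c])$, which is precisely~\eqref{Bl}, establishing that $A$ is zero Lie product determined. The only genuinely delicate ingredient is the linearisation producing the Jordan identity; everything else is bookkeeping with the bimodule structure of $A^*$ and the definition of an inner derivation.
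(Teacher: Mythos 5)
Your proposal is correct and is essentially the paper's own proof: the same map $\delta(a)(c)=\varphi(a,c)$, the same observation that skew-symmetry makes $\delta$ cyclic, and the same appeal to the hypothesis followed by reading off $\varphi(a,c)=\omega([c,a])$ from innerness. The only cosmetic difference is that you verify the Jordan identity in its squared form~\eqref{j2} by linearizing $\varphi(a,a^2)=0$ in one variable, while the paper checks the equivalent form~\eqref{j1} via the fully polarized identity~\eqref{Jo}; since the paper itself records the equivalence of~\eqref{j1} and~\eqref{j2}, this is the same argument.
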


\begin{proof}
Let $\varphi\colon A\times A\to \mathbb C$ be a  continuous bilinear functional satisfying~\eqref{B}. In particular, $\varphi$ satisfies
\[
\varphi(a,a)=0\quad\mbox{and}\quad \varphi(a^2,a)=0\quad (a\in A).
\]
Linearizing these identities we see that $\varphi$ is skew-symmetric and satisfies
\begin{equation}\label{Jo}
\varphi(a\circ b,c) + \varphi(c\circ a,b) + \varphi(b\circ c,a) =0\quad (a,b,c\in A).
\end{equation}
Define $\delta\colon A\to A^*$ by
\[
\delta(a)(c)= \varphi(a,c)
\]
for all $a$, $c\in A$. Applying~\eqref{Jo}, for all $a$, $b$, $c\in A$ we obtain
\begin{align*}
\delta(a\circ b)(c)=\varphi(a\circ b,c)=  \varphi(a,b\circ c) +\varphi(b,c\circ a).
\end{align*}
From
\begin{align*}
\big(\delta(a)\circ b  + a \circ \delta(b)\big)(c) & = \big(\delta(a)\cdot b + b\cdot \delta(a) + a\cdot \delta(b) + \delta(b)\cdot a\big)(c)\\
& = \delta(a) (bc) + \delta(a)(cb) + \delta(b)(ca) + \delta(b)(ac)\\
& = \varphi(a,b\circ c) +\varphi(b,c\circ a)
\end{align*}
we thus see that $\delta$ is a Jordan derivation.
Further, it is clear that $\delta$ is cyclic. By our hypothesis, it follows that $\delta$ is an inner derivation.
Hence there exists $\omega\in A^*$ such that $\varphi(a,c)=\omega([c,a])$.
\end{proof}

\begin{corollary}\label{rtja}
Let $A$ be a cyclically amenable Banach algebra. If every
continuous Jordan derivation from $A$ into $A^*$ is a derivation, then  $A$ is a zero Lie product determined Banach algebra.
\end{corollary}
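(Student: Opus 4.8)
The plan is to derive this as an immediate consequence of Theorem~\ref{tja}. That theorem says that $A$ is zero Lie product determined provided every continuous \emph{cyclic Jordan} derivation from $A$ into $A^*$ is an inner derivation. So my task reduces to showing that the two hypotheses of the corollary together imply this single hypothesis of the theorem. Concretely, I would start with an arbitrary continuous cyclic Jordan derivation $\delta\colon A\to A^*$ and argue that it must be inner.

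First I would use the second hypothesis. Since $\delta$ is in particular a continuous Jordan derivation, the assumption that every continuous Jordan derivation from $A$ into $A^*$ is a derivation tells us that $\delta$ is an (ordinary) derivation. At this point $\delta$ is a continuous derivation that is moreover cyclic, since cyclicity was assumed from the outset and is preserved — it is a property of $\delta$ as a map, not something that depends on whether we regard $\delta$ as a Jordan derivation or as a derivation.

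Next I would invoke the first hypothesis: $A$ is cyclically amenable, which by definition means that every continuous cyclic derivation $D\colon A\to A^*$ is inner. Applying this to our $\delta$, which we have just shown is a continuous cyclic derivation, yields that $\delta$ is inner. Thus every continuous cyclic Jordan derivation from $A$ into $A^*$ is an inner derivation, which is exactly the hypothesis of Theorem~\ref{tja}. Invoking that theorem completes the argument.

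There is no real obstacle here; the corollary is a routine chaining of definitions and of Theorem~\ref{tja}. The only point that warrants a word of care is to make explicit that cyclicity of $\delta$ survives the reclassification: a cyclic Jordan derivation that happens to be a derivation is a cyclic derivation, because the defining identity $\delta(a)(b)=-\delta(b)(a)$ does not reference the (Jordan) derivation property at all. Once that observation is in place, the two hypotheses apply in sequence with nothing left to check.
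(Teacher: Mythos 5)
Your proof is correct and is precisely the intended argument: the paper states Corollary~\ref{rtja} without proof because it follows from Theorem~\ref{tja} by exactly the chaining you describe (cyclic Jordan derivation $\Rightarrow$ derivation by hypothesis, still cyclic, hence inner by cyclic amenability). Your explicit remark that cyclicity is a property of the map itself, independent of whether it is viewed as a Jordan derivation or a derivation, is the only point worth spelling out, and you handle it properly.
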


\begin{remark}
Johnson~\cite{J} proved that every continuous Jordan derivation from a $C^*$-algebra $A$ into any Banach $A$-bimodule $M$ is a derivation
(see~\cite{ABV} for an alternative proof). Therefore Corollary~\ref{wac} follows also from Corollary~\ref{rtja}.
In~\cite{J} it is also shown that every continuous Jordan derivation from the group algebra $L^1(G)$
into any Banach $L^1(G)$-bimodule $M$ is a derivation in the case where the group $G$ is amenable.
This clearly implies that Corollary~\ref{wag} follows from  Corollary~\ref{rtja} in the case where the group is amenable.
Nevertheless, to the best of our knowledge, it is not known whether every continuous Jordan derivation from
$L^{1}(G)$ into $L^{1}(G)^{*}$ is actually a derivation for each locally compact group $G$.
\end{remark}

\begin{proposition}\label{p2044}
Let $\mathbb{S}_2$ be the free semigroup with two generators.
Then there exists a continuous Jordan derivation
from $\ell^1(\mathbb{S}_2)$ into $\ell^1(\mathbb{S}_{2})^*$ which is not a derivation.
\end{proposition}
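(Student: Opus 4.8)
The plan is to exhibit an explicit bounded bilinear functional and read it off as a Jordan derivation. Write the elements of $\mathbb{S}_2$ as nonempty words in the two generators $x,y$, so that the product in $\ell^1(\mathbb{S}_2)$ is concatenation, and identify $\ell^1(\mathbb{S}_2)^*$ with $\ell^\infty(\mathbb{S}_2)$, the bimodule actions being $(a\cdot f)(c)=f(ca)$ and $(f\cdot a)(c)=f(ac)$. A continuous linear map $\delta\colon \ell^1(\mathbb{S}_2)\to \ell^1(\mathbb{S}_2)^*$ is the same thing as a bounded bilinear functional $\psi(a,c)=\delta(a)(c)$, boundedness meaning $\sup_{s,t\in\mathbb{S}_2}|\psi(s,t)|<\infty$. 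First I would translate the two properties we care about into conditions on $\psi$ evaluated on basis words $s,t,u$: unwinding the module actions, $\delta$ is a Jordan derivation if and only if
\[
\psi(st,u)+\psi(ts,u)=\psi(s,tu)+\psi(s,ut)+\psi(t,su)+\psi(t,us),
\]
while $\delta$ is a derivation if and only if $\psi(st,u)=\psi(s,tu)+\psi(t,us)$, for all words $s,t,u$. Since both sides of each identity are bounded multilinear, it suffices to test them on basis words.

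The key idea is the ansatz $\psi(s,t)=h(st)$ for a function $h\in\ell^\infty(\mathbb{S}_2)$ (equivalently $\delta(a)=h\cdot a$), because it renders both conditions transparent and, crucially, different. Substituting, the Jordan identity collapses to
\[
h(uwv)+h(vwu)=0\qquad(u,v,w\in\mathbb{S}_2),
\]
whereas the derivation identity collapses to $h(vwu)=0$ for all $u,v,w$, i.e. to $h$ vanishing on every word of length at least $3$. Thus any $h$ that is antisymmetric under swapping the two outer blocks but does not vanish identically on long words will produce a Jordan derivation that is not a derivation.

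To make the verification of the global Jordan identity trivial I would confine the support of $h$ to words of length exactly $3$. Then in $h(uwv)+h(vwu)=0$ a nonzero term forces $|uwv|=3$, so $u,w,v$ are single letters and $uwv\mapsto vwu$ is exactly word reversal; hence the only surviving constraint is $h(w)+h(\widetilde w)=0$ on length-three words, where $\widetilde w$ denotes $w$ read backwards, and all longer words impose nothing. Concretely I would set $h(xxy)=1$, $h(yxx)=-1$, and $h=0$ on every other word; this is bounded (finitely supported) and satisfies the reversal condition, so $\delta(a)=h\cdot a$ is a continuous Jordan derivation. Finally, testing the derivation identity at $s=y$, $t=x$, $u=x$ gives $\psi(yx,x)=h(yxx)=-1$ on the left but $\psi(y,xx)+\psi(x,xy)=h(yxx)+h(xxy)=0$ on the right, so $\delta$ is not a derivation.

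The main obstacle is not any single computation but choosing the right ansatz: once one commits to $\psi(s,t)=h(st)$, the Jordan and derivation conditions become the contrasting pair ``$h(uwv)+h(vwu)=0$'' versus ``$h$ kills words of length at least $3$'', and the only remaining worry---that checking the Jordan identity on all of $\ell^1(\mathbb{S}_2)$ might force extra relations coming from words of length greater than $3$---is precisely what the length-three support assumption eliminates. The freeness of $\mathbb{S}_2$ (unique factorization of words) is what guarantees both that prescribing $h$ on length-three words is unconstrained beyond the reversal symmetry and that the failing triple above genuinely witnesses that $\delta$ is not a derivation.
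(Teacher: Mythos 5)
Your proof is correct, but it takes a genuinely different route from the paper. You work intrinsically on the semigroup: the ansatz $\delta(a)=h\cdot a$, i.e.\ $\psi(s,t)=h(st)$, reduces the Jordan condition to the combinatorial constraint $h(uwv)+h(vwu)=0$ and the derivation condition to $h$ vanishing on words of length at least $3$; your reductions are exact (both identities are bounded multilinear, so testing on point masses suffices, and since words of $\mathbb{S}_2$ are nonempty, a product of three words has length at least $3$, so restricting the support of $h$ to length-$3$ words makes the Jordan constraint collapse to reversal-antisymmetry on letters, which $h(xxy)=1=-h(yxx)$ satisfies), and your witness triple $s=y$, $t=x$, $u=x$ correctly produces $-1\neq 0$. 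The paper instead uses the universal property of the free semigroup to build a continuous homomorphism $\Phi\colon\ell^1(\mathbb{S}_2)\to M_4(\mathbb{C})$ sending the generators to anticommuting square-zero matrices $u_1,u_2$ (so that $\Phi$ annihilates \emph{all} Jordan products $a\circ b$ and all squares), and then takes the symmetric rank-one form $D(a)(b)=\tau(\Phi(a))\tau(\Phi(b))$ for a suitable linear functional $\tau$ on $M_4(\mathbb{C})$: the Jordan property is then vacuous, since $D(a^2)=0$ and $D(a)(a\circ b)=0$, while failure of the Leibniz rule is checked on $\delta_{x_1}\star\delta_{x_2}$. The two counterexamples are genuinely different bilinear forms (the paper's $\psi(s,t)=g(s)g(t)$ with $g=\tau\circ\Phi$ supported on words of length at most $2$ is symmetric; yours is of the one-sided module form $h\cdot a$ supported via products of total length $3$). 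Your approach is more elementary and self-contained, avoiding the auxiliary matrix representation and isolating the precise obstruction --- a reversal-antisymmetric weight on length-$3$ words --- while the paper's construction gets the Jordan property ``for free'' from a representation with identically zero Jordan products, in line with the square-zero techniques used elsewhere by the authors.
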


\begin{proof}
Let $x_1$ and $x_2$ be the generators of $\mathbb{S}_2$, and
let $u_1,u_2\in M_4(\mathbb{C})$ be the matrices given by
\[
u_1=
\begin{pmatrix}
0&1&0&0\\
0&0&0&0\\
0&0&0&-1\\
0&0&0&0\\
\end{pmatrix}, \
u_2=
\begin{pmatrix}
0&0&1&0\\
0&0&0&1\\
0&0&0&0\\
0&0&0&0\\
\end{pmatrix}.
\]
For each word $w\in \mathbb{S}_2$
let $\delta_w\in \ell^1(\mathbb{S}_2)$ stands for the characteristic function of the set $\{w\}$.
Further, each element $w\in\mathbb{S}_2$ has the form
$w=x_{j_1}^{k_1}\cdots x_{j_n}^{k_n}$ where $n, k_1,\ldots,k_n\in\mathbb{N}$ and $j_1,\ldots,j_n\in \{1,2\}$,
and, since $\Vert u_1\Vert=\Vert u_2\Vert=1$, there exists a unique continuous algebra homomorphism
$\Phi\colon\ell^{1}(\mathbb{S}_2)\to M_4(\mathbb{C})$ such that
\[
\Phi(\delta_{x_{j_1}^{k_1}\cdots x_{j_n}^{k_n}})=u_{j_1}^{k_1}\cdots u_{j_n}^{k_n}
\quad (n, k_1,\ldots,k_n\in\mathbb{N}, \, j_1,\ldots,j_n\in \{1,2\})
\]
Observe that $\Phi(\ell^1(\mathbb{S}_2))$ is the subalgebra of $M_4(\mathbb{C})$ generated by $u_1$ and $u_2$
and that
\[
u_1^2=u_2^2=u_1u_2+u_2u_1=0.
\]
This implies that $\Phi(a\circ b)=0$ for all $a,b\in\ell^1(\mathbb{S}_2)$.
We also define $\tau\colon M_4(\mathbb{C})\to\mathbb{C}$ by $\tau(u)=u_{12}+u_{14}$ $(u\in M_4(\mathbb{C}))$,
and finally
$D\colon\ell^{1} (\mathbb{S}_2)\to\ell^{1} (\mathbb{S}_{2})^*$ by
\[
D(a)(b)=\tau(\Phi(a))\tau(\Phi(b)) \quad (a,b\in\ell^1(\mathbb{S}_2)).
\]
We claim that $D$ is a Jordan derivation. If $a$, $b\in\ell^1(\mathbb{S}_2)$,
then $\Phi(a^2)=0$, which yields $D(a^2)(b)=0$, and
\[
\bigl(D(a)\cdot a+a\cdot D(a)\bigr)(b)=D(a)(a\circ b)=\tau(\Phi(a))\tau(\Phi(a\circ b))=0.
\]
We now show that $D$ is not a derivation. Indeed,
\[
D(\delta_{x_1}\star\delta_{x_2})(\delta_{x_1})=D(\delta_{x_1x_2})(\delta_{x_1})=1,
\]
while
\[
\bigl(D(\delta_{x_1})\cdot\delta_{x_2}+\delta_{x_1}\cdot D(\delta_{x_2})\bigr)(\delta_{x_1})=
D(\delta_{x_1})(\delta_{x_2x_1})+D(\delta_{x_2})(\delta_{x_1^2})=-1. \qedhere
\]
\end{proof}

\begin{question}
Is the semigroup algebra $\ell^1(\mathbb{S}_2)$ a zero Lie product
determined Banach algebra? Since $\ell^1(\mathbb{S}_2)$ is cyclically
amenable it suffices to find out whether every continuous
cyclic Jordan derivation from $\ell^1(\mathbb{S}_2)$ into
$\ell^1(\mathbb{S}_{2})^*$ is a derivation. Proposition~\ref{p2044} indicates the delicacy of this problem.
\end{question}

\begin{remark}
Let $X$ be a Banach space.
On account of~\cite[Theorem 6.4]{J}, every continuous Jordan derivation from $\mathcal{A}(X)$ into
any Banach $\mathcal{A}(X)$-bimodule is a derivation.
On the other hand,
we refer the reader to~\cite{Bla, DGG, Gr2, Gr3} for a deep discussion of the weak amenability of $\mathcal{A}(X)$.
It is worth pointing out that $\mathcal{A}(\ell^p(X))$ is weakly amenable for each Banach space $X$ having the
bounded approximation property and $1\le p<\infty$ (see~\cite[Corollary 4.3]{Gr3}).
\end{remark}

The preceding remark together with Theorem~\ref{tja} yield the following.

\begin{corollary}
Let $X$ be a Banach space having the bounded approximation property, and $1\le p<\infty$.
Then  $\mathcal{A}(\ell^p(X))$ is a zero Lie product determined Banach algebra.
In particular, $\mathcal{A}(\ell^p)$ is a zero Lie product determined Banach algebra.
\end{corollary}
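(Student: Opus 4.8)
The plan is to verify, for $A=\mathcal{A}(\ell^p(X))$, the single hypothesis of Theorem~\ref{tja}: that every continuous cyclic Jordan derivation $D\colon A\to A^*$ is an inner derivation. Once this is in place, Theorem~\ref{tja} immediately gives that $A$ is zero Lie product determined, and the special case $\mathcal{A}(\ell^p)$ follows by taking $X=\mathbb{C}$, which trivially has the bounded approximation property.

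First I would fix an arbitrary continuous cyclic Jordan derivation $D\colon A\to A^*$ and temporarily set aside its cyclicity, retaining only that $D$ is a continuous Jordan derivation into the dual bimodule $A^*$. To this I would apply the first fact recorded in the preceding remark, namely Johnson's \cite[Theorem~6.4]{J}: for any Banach space $E$, every continuous Jordan derivation from $\mathcal{A}(E)$ into an arbitrary Banach $\mathcal{A}(E)$-bimodule is automatically an ordinary derivation. Taking $E=\ell^p(X)$ and the bimodule $A^*$, I conclude that $D$ is in fact a derivation.

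Next I would invoke the second fact from the remark, the weak amenability of $A$: since $X$ has the bounded approximation property and $1\le p<\infty$, \cite[Corollary~4.3]{Gr3} guarantees that $\mathcal{A}(\ell^p(X))$ is weakly amenable, so that every continuous derivation from $A$ into $A^*$ is inner. Applying this to the derivation $D$ produced in the previous step shows that $D$ is inner. Hence every continuous cyclic Jordan derivation from $A$ into $A^*$ is inner, which is precisely the hypothesis needed to apply Theorem~\ref{tja}.

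I do not expect any genuine difficulty here: granting the two results quoted in the remark, the argument is a short assembly resting on the trivial observations that a cyclic Jordan derivation is in particular a Jordan derivation (so \cite[Theorem~6.4]{J} applies) and that the resulting object is a derivation into $A^*$ (so weak amenability applies). The only point that will require a little care is the bookkeeping of hypotheses---checking that $\mathcal{A}(\ell^p(X))$ really is of the form $\mathcal{A}(E)$ covered by Johnson's theorem, and that the combination of the bounded approximation property with $1\le p<\infty$ is exactly what \cite[Corollary~4.3]{Gr3} demands for weak amenability.
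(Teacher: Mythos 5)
Your proposal is correct and is essentially the paper's own argument: the paper proves this corollary by combining the preceding remark (Johnson's \cite[Theorem 6.4]{J} turning continuous Jordan derivations from $\mathcal{A}(\ell^p(X))$ into derivations, and Gr\o{}nb\ae{}k's \cite[Corollary 4.3]{Gr3} giving weak amenability, hence innerness) with Theorem~\ref{tja}, exactly as you do. Your bookkeeping points are the right ones and all check out.
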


\section{Matrix algebras}\label{s4}

Let $A$ be a Banach algebra, and let $n\in\mathbb{N}$.
By $M_n(A)$ we denote the Banach algebra of $n\times n$ matrices with entries in $A$.
Let $A^\sharp$ denote the algebra formed by adjoining an identity to $A$
as defined in~\cite[Definition~1.3.3]{D}.
For $i$, $j\in \{1,\ldots,n\}$ and $a\in A^\sharp$,
let  $aE_{ij}\in M_n(A^\sharp)$  be the matrix with $a$ in the  $(i,j)\text{th}$ position and $0$ elsewhere.
We abbreviate $\mathbf{1}E_{ij}$ to $E_{ij}$. Note that for every $\mathbf{a}=(a_{ij})\in M_n(A^\sharp)$
we have
\begin{equation}\label{mu1}
E_{ij}\mathbf{a}E_{kl}=a_{jk}E_{il} \quad  (1\le i,j,k,l\le n).
\end{equation}
The algebra $M_n(A)$ is a closed two-sided ideal of $M_n(A^\sharp)$ and therefore $M_n(A)$ is a Banach
$M_n(\mathbb{C}\mathbf{1})$-bimodule.
From now on, we identify $M_n(\mathbb{C}\mathbf{1})$ with $M_n(\mathbb{C})$ in the natural way.

\begin{lemma}\label{lj1}
Let $A$ be a Banach algebra with a bounded approximate identity, and let $D\colon A\to A^*$ be a continuous Jordan derivation. Suppose that $B$ is a Banach algebra which contains $A$ as a closed
two-sided ideal.
Then there exists a continuous Jordan derivation $\Delta\colon B\to A^*$ which extends $D$.
\end{lemma}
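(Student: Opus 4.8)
The plan is to recover $\Delta(b)$, for $b\in B$, from the values of $D$ by sandwiching $b$ between a bounded approximate identity of $A$. Since $A$ is a closed two-sided ideal of $B$, the module actions recalled before Lemma~\ref{l2} extend to a Banach $B$-bimodule structure on $A^*$, namely $(b\cdot f)(c)=f(cb)$ and $(f\cdot b)(c)=f(bc)$ for $b\in B$, $f\in A^*$ and $c\in A$ (note that $cb,bc\in A$). Fix a bounded approximate identity $(e_\lambda)_{\lambda\in\Lambda}$ for $A$, with $\Vert e_\lambda\Vert\le M$. The guiding computation is this: were a Jordan derivation $\Delta\colon B\to A^*$ extending $D$ to exist, then applying~\eqref{j3} to the triple $(e_\lambda,b,e_\lambda)$, together with $\Delta(e_\lambda)=D(e_\lambda)$ and $\Delta(e_\lambda b e_\lambda)=D(e_\lambda b e_\lambda)$ (since $e_\lambda,\,e_\lambda b e_\lambda\in A$), would force
\[
e_\lambda\cdot\Delta(b)\cdot e_\lambda=D(e_\lambda b e_\lambda)-D(e_\lambda)\cdot(b e_\lambda)-(e_\lambda b)\cdot D(e_\lambda)=:T_\lambda(b).
\]
Since $(e_\lambda\cdot f\cdot e_\lambda)(c)=f(e_\lambda c e_\lambda)\to f(c)$ for all $f\in A^*$ and $c\in A$, the left-hand side tends weak-$*$ to $\Delta(b)$; thus $T_\lambda(b)\in A^*$ is the natural candidate for $\Delta(b)$.

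Accordingly I would fix an ultrafilter $\mathcal U$ on $\Lambda$ dominating the order filter (so that limits along $\mathcal U$ see the eventual behaviour of the net) and set $\Delta(b)$ to be the weak-$*$ limit of $T_\lambda(b)$ along $\mathcal U$. The net $\bigl(T_\lambda(b)\bigr)_\lambda$ is norm-bounded in $A^*$ by $3M^2\Vert D\Vert\,\Vert b\Vert$, so by weak-$*$ compactness of balls the limit exists and lies in $A^*$; this makes $\Delta$ well defined, and it is manifestly linear and bounded. That $\Delta$ extends $D$ follows from the guiding computation applied to $D$ itself: for $a\in A$, identity~\eqref{j3} gives $T_\lambda(a)=e_\lambda\cdot D(a)\cdot e_\lambda$, whose weak-$*$ limit is $D(a)$.

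The main obstacle, and the technical heart of the proof, is to check that $\Delta$ is a Jordan derivation, i.e.\ that $\Delta(b\circ b')=\Delta(b)\circ b'+b\circ\Delta(b')$ for all $b,b'\in B$ (equivalently~\eqref{j2} on $B$). I would test this identity against an arbitrary $c\in A$ and expand each term as a weak-$*$ limit of the explicit functionals $T_\lambda$, so as to reduce the whole statement to relations among values of $D$ at elements of $A$ alone. This reduction is available because $e_\lambda x,\,x e_\lambda\in A$ and, crucially, products such as $e_\lambda b b' e_\lambda=(e_\lambda b)(b' e_\lambda)$ already lie in $A$, so the Jordan identities~\eqref{j1}--\eqref{j3} for $D$ apply to them. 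The delicacy is twofold: a Jordan derivation does not split ordinary products, so these $A$-products must be processed through~\eqref{j1}--\eqref{j3} rather than a Leibniz rule; and one must control the resulting error terms using the approximations $\Vert e_\lambda x-x\Vert,\Vert x e_\lambda-x\Vert\to0$ (for $x\in A$) and justify that the weak-$*$ limit along $\mathcal U$ interchanges with them, which rests on the uniform bound on $(e_\lambda)$ together with the continuity of $D$ and of the module actions. A natural way to organise the computation is to first dispose of the mixed case $\Delta(a\circ b)=D(a)\circ b+a\circ\Delta(b)$ with $a\in A$ and $b\in B$ (where $a\circ b\in A$, so the left side is literally $D(a\circ b)$), and then to leverage it in the purely $B$-valued identity.

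As a cross-check on the strategy, the same conclusion can also be approached through the second dual: $D$ extends to a weak-$*$-continuous Jordan derivation $A^{**}\to A^*$ for an Arens product, while the $B$-action on $A$ induces a continuous homomorphism $B\to A^{**}$ (the bounded approximate identity furnishing a mixed identity), whose composition with the extended derivation should recover $\Delta$. I would nevertheless carry out the direct argument above, since it keeps every map valued in $A^*$ and avoids the bookkeeping of the two Arens products and of the compatibility of the module actions.
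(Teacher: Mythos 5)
Your construction is essentially the one in the paper, and the parts you carry out are correct. The paper defines $\Delta(x)(a)=\lim_{\mathcal U}D(u_\lambda x u_\lambda)(a)$; your candidate $T_\lambda(b)$ differs only by the correction terms $D(e_\lambda)\cdot(be_\lambda)+(e_\lambda b)\cdot D(e_\lambda)$, and these actually vanish in the limit: applying~\eqref{j1} to $e_\lambda\circ a$ gives $\lim_\lambda D(e_\lambda)(a\circ c)=0$ for all $a,c\in A$, and since $\tfrac12 e_\mu\circ a\to a$ in norm, $D(e_\lambda)\to 0$ weak-$*$, so your $\Delta$ coincides with the paper's. Well-definedness, boundedness, linearity and the extension property are all fine (your use of~\eqref{j3} to get $T_\lambda(a)=e_\lambda\cdot D(a)\cdot e_\lambda$ is a clean variant of the paper's norm-continuity argument).

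The genuine gap is that the entire substance of the lemma --- verifying~\eqref{j2} for $\Delta$ --- is left as a plan, and the plan as described would fail. If you test $\Delta(x^2)=\Delta(x)\cdot x+x\cdot\Delta(x)$ against an \emph{arbitrary} $c\in A$, you are inevitably left with terms such as $D(xe_\lambda^2x)(c)$ (e.g.\ after symmetrizing $e_\lambda x^2e_\lambda=(e_\lambda x)(xe_\lambda)$ via~\eqref{j1}, or, as in the paper, after applying~\eqref{j2} to $e_\lambda xe_\lambda\in A$, which produces $D(e_\lambda xe_\lambda^2xe_\lambda)$). The arguments of $D$ here lie in $A$ but have the $B$-element $x$ on the \emph{outside}, so they do not converge in norm; the only error controls you invoke --- $\Vert e_\lambda a-a\Vert\to 0$ plus continuity of $D$ --- cannot dispose of them, and~\eqref{j1}--\eqref{j3} cannot split $D$ of a plain (non-symmetrized) product. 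The paper's resolution is exactly the idea missing from your sketch: evaluate only at test vectors of the form $bcb$ and use~\eqref{j3} to rewrite $D(y)(bcb)=(b\cdot D(y)\cdot b)(c)=D(byb)(c)-D(b)(ybc)-D(b)(cby)$, which places $A$-elements on \emph{both} sides of $y$ and renders every argument norm-convergent; one then needs that every $a\in A$ factors as $a=bcb$, which is a nontrivial Jordan factorization theorem (\cite[Th\'eor\`eme II.16]{AL}) --- Cohen factorization $a=bc$ does not suffice, precisely because Jordan derivations only cooperate with symmetric sandwiches. Your fallback of proving the mixed identity ($a\in A$, $b\in B$) first and then ``leveraging'' it stalls for the same reason: passing from $a\in A$ to $x\in B$ would mean substituting $a=e_\lambda xe_\lambda$ and interchanging $\Delta$ with a limit in a topology for which $\Delta$ has no known continuity. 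As written, the proposal establishes the easy half of the lemma and omits its technical core.
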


\begin{proof}
Let $(u_{\lambda})_{\lambda\in\Lambda}$ be an approximate identity for $A$ of bound $C$,
and let $\mathcal{U}$ be an ultrafilter on $\Lambda$ refining the order filter.
Let $a\in A$ and $x\in B$. For every $\lambda\in\Lambda$, we have
\[
\vert D(u_\lambda x u_\lambda)(a)\vert\le
\Vert D\Vert\Vert u_\lambda xu_\lambda\Vert\Vert a\Vert\le
C^2\Vert D\Vert \Vert x\Vert\Vert a\Vert.
\]
Therefore the net of complex numbers $\bigl(D(u_\lambda x u_\lambda)(a)\bigr)_{\lambda\in\Lambda}$
is bounded by $C^2\Vert D\Vert \Vert x\Vert\Vert a\Vert$ and so it has a unique limit along the ultrafilter $\mathcal{U}$.
Hence we can define $\Delta\colon B\to A^*$ by
\[
\Delta(x)(a)=\lim_\mathcal{U}D(u_\lambda x u_\lambda)(a)
\]
for all $x\in B$ and $a\in A$.
Indeed, it is routine that $\Delta(x)\in A^*$ with $\Vert \Delta(x)\Vert\le C^2\Vert D\Vert \Vert x\Vert$ and
that $\Delta$ is a continuous linear map with $\Vert\Delta\Vert\le C^2\Vert D\Vert$.

We claim that  $\Delta$ extends $D$.
If $b\in A$, then $(u_\lambda b u_\lambda)_{\lambda\in\Lambda}\to b$ in norm and the continuity
of $D$ gives $\bigl(D(u_\lambda b u_\lambda)\bigr)_{\lambda\in\Lambda}\to D(b)$.
We thus get
\[
\Delta(b)(a)=\lim_\mathcal{U}D(u_\lambda b u_\lambda)(a)=D(b)(a)
\]
for each $a\in A$.

Our next goal is to show that $\Delta$ is a Jordan derivation.
By~\eqref{j3} we have
\begin{equation*}
\begin{split}
D(u_\lambda x^2 u_\lambda)(bcb)
& =
\bigl(b\cdot D(u_\lambda x^2 u_\lambda)\cdot b\bigr)(c)\\
& =
D\bigl(b(u_\lambda x^2 u_\lambda)b\bigr)(c)\\
&\quad {}-\Bigl(D(b)\cdot\bigl((u_\lambda x^2 u_\lambda)b\bigr)\Bigr)(c)
-\Bigl(\bigl(b(u_\lambda x^2 u_\lambda)\bigr)\cdot D(b)\Bigr)(c)\\
& = D(bu_\lambda x^2 u_\lambda b)(c) -D(b)(u_\lambda x^2 u_\lambda bc)
-D(b)(cbu_\lambda x^2 u_\lambda).\\
\end{split}
\end{equation*}
Since $\Vert bu_\lambda- b\Vert\to 0$ and $\Vert u_\lambda b-b\Vert\to 0$,
it follows that $\Vert bu_\lambda x^2 u_\lambda b-bx^2b\Vert\to 0$, and the continuity
of $D$ then gives
\[
\lim_\mathcal{U}D(bu_\lambda x^2 u_\lambda b)(c)= D(bx^2b)(c).
\]
On the other hand,
\begin{equation*}
\begin{split}
\Vert u_\lambda x^2u_\lambda bc-x^2bc\Vert
&\le
\Vert u_\lambda x^2u_\lambda bc-u_\lambda x^2bc\Vert+
\Vert u_\lambda x^2 bc-x^2bc\Vert \\
&\le
\Vert u_\lambda x^2\Vert\Vert u_\lambda (bc)-bc\Vert+
\Vert u_\lambda (x^2 bc)-x^2bc\Vert \\
&\le
C\Vert x^2\Vert\Vert u_\lambda (bc)-bc\Vert+
\Vert u_\lambda (x^2 bc)-x^2bc\Vert\to 0,\\
\end{split}
\end{equation*}
and the continuity of the functional $D(b)$ now gives
\[
\lim_\mathcal{U}D(b)(u_\lambda x^2 u_\lambda bc)=D(b)(x^2bc).
\]
By a similar argument we prove that
\[
\lim_{\mathcal{U}} \Vert cbu_\lambda x^2 u_\lambda-cbx^2\Vert = 0
\]
and hence that
\[
\lim_\mathcal{U}\bigl(D(b)(cbu_\lambda x^2 u_\lambda)\bigr)_{\lambda\in\Lambda}=D(b)(cbx^2).
\]
We thus get
\[
\Delta(x^2)(bcb)=D(bx^2b)(c)-D(b)(x^2bc)-D(b)(cbx^2).
\]
In much the same way we get
\begin{equation}\label{j4}
\begin{split}
D\bigl((u_\lambda xu_\lambda)^2\bigr)(bcb) & =
D(u_\lambda xu_\lambda^2 x u_\lambda)(bcb) \\
& =
\bigl(b\cdot D(u_\lambda x u_\lambda^2 xu_\lambda)\cdot b\bigr)(c)\\
& =
D\bigl(b(u_\lambda x u_\lambda^2 x u_\lambda)b\bigr)(c)-\Bigl(D(b)\cdot\bigl((u_\lambda x u_\lambda^2 x u_\lambda)b\bigr)\Bigr)(c)\\
& \quad {}-\Bigl(\bigl(b(u_\lambda x u_\lambda^2 x u_\lambda)\bigr)\cdot D(b)\Bigr)(c)\\
& =
D(bu_\lambda x u_\lambda^2x u_\lambda b)(c)-D(b)(u_\lambda x u_\lambda^2 x u_\lambda bc)\\
& \quad{}-D(b)(cbu_\lambda x u_\lambda^2 x u_\lambda)\\
& \to
D(bx^2b)(c)-D(b)(x^2bc)-D(b)(cbx^2)\\
& = \Delta(x^2)(bcb).
\end{split}
\end{equation}
On the other hand, from~\eqref{j2} we deduce that
\begin{equation*}
\begin{split}
D\bigl((u_\lambda xu_\lambda)^2\bigr)(bcb) & =
\bigl(D(u_\lambda xu_\lambda)\cdot(u_\lambda xu_\lambda)+
(u_\lambda xu_\lambda)\cdot D(u_\lambda xu_\lambda)\bigr)(bcb)\\
& = D(u_\lambda x u_\lambda)
\bigl(u_\lambda xu_\lambda bcb+bcbu_\lambda xu_\lambda\bigr).\\
\end{split}
\end{equation*}
Write
\[
v_\lambda=u_\lambda xu_\lambda bcb+bcbu_\lambda xu_\lambda
\]
and
\[
v=xbcb+bcbx.
\]
Then $(v_\lambda)_{\lambda\in\Lambda}\to v$ in norm and therefore
\begin{equation*}
\begin{split}
\vert D(u_\lambda x u_\lambda)(v_\lambda)-D(u_\lambda xu_\lambda)(v)\vert&\le
\Vert D(u_\lambda x u_\lambda)\Vert\Vert v_\lambda-v\Vert \\
&\le
C^2\Vert D\Vert\Vert x\Vert\Vert v_\lambda-v\Vert\to 0.
\end{split}
\end{equation*}
Since $\lim_\mathcal{U}D(u_\lambda xu_\lambda)(v)=\Delta(x)(v)$, it may be concluded that
\begin{equation*}
\begin{split}
\lim_\mathcal{U}D\bigl((u_\lambda xu_\lambda)^2\bigr)(bcb)
& =
\lim_\mathcal{U}D(u_\lambda x u_\lambda)(v_\lambda) =\Delta(x)(v)\\
& =\Delta(x)(xbcb+bcbx)\\
&=\bigl(\Delta(x)\cdot x+x\cdot\Delta(x)\bigr)(bcb).
\end{split}
\end{equation*}
From what has been proved and~\eqref{j4} we deduce that
\[
\bigl(\Delta(x^2)-\Delta(x)\cdot x-x\cdot\Delta(x)\bigr)(bcb)=0
\]
for all $b,c\in A$.
Let $a\in A$. Then~\cite[Th\'eor\`eme II.16]{AL} gives $b,c\in A$ such that $a=bcb$ and so
\[
\bigl(\Delta(x^2)-\Delta(x)\cdot x-x\cdot\Delta(x)\bigr)(a)=0.
\]
This entails that $\Delta(x^2)=\Delta(x)\cdot x+x\cdot\Delta(x)$.
\end{proof}

\begin{corollary}\label{cj2}
Let $A$ be a Banach algebra with a bounded approximate identity,
and let $D\colon M_n(A)\to M_n(A)^*$, where $n\ge 2$, be a continuous Jordan derivation.
Then $D$ is a derivation and there exists a continuous derivation $\Delta\colon M_n(A^\sharp)\to M_n(A)^*$ which extends $D$.
\end{corollary}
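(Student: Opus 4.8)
The plan is to produce $\Delta$ first as a Jordan derivation, essentially for free, from Lemma~\ref{lj1}, and then to do the real work of upgrading it to a genuine derivation by exploiting the matrix structure of $M_n(A^\sharp)$. Restricting the resulting derivation back to the ideal $M_n(A)$ will then show that $D$ itself is a derivation, so both assertions of the corollary drop out simultaneously.

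First I would check that $M_n(A)$ inherits a bounded approximate identity from $A$: if $(u_\lambda)$ is a bounded approximate identity for $A$, then the diagonal matrices $u_\lambda(E_{11}+\cdots+E_{nn})$ form one for $M_n(A)$, since they act entrywise and there are only finitely many entries to control. As the excerpt records that $M_n(A)$ is a closed two-sided ideal of $M_n(A^\sharp)$, Lemma~\ref{lj1} applied with this ideal inside $B=M_n(A^\sharp)$ yields a continuous Jordan derivation $\Delta\colon M_n(A^\sharp)\to M_n(A)^*$ extending $D$. Write $\mathbf{I}=E_{11}+\cdots+E_{nn}$ for the identity of $M_n(A^\sharp)$; since $M_n(A)^*$ is a unital $M_n(A^\sharp)$-bimodule, the usual computation $\Delta(\mathbf{I})=\Delta(\mathbf{I})\cdot\mathbf{I}+\mathbf{I}\cdot\Delta(\mathbf{I})=2\Delta(\mathbf{I})$ gives $\Delta(\mathbf{I})=0$.

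The core of the argument is to show that the Jordan derivation $\Delta$ on the unital matrix algebra $M_n(A^\sharp)$ is in fact a derivation. I would measure the failure by the skew-symmetric bilinear map $\Psi(\mathbf{a},\mathbf{b})=\Delta(\mathbf{a}\mathbf{b})-\Delta(\mathbf{a})\cdot\mathbf{b}-\mathbf{a}\cdot\Delta(\mathbf{b})$, which is skew-symmetric precisely because $\Delta$ is Jordan. By bilinearity and continuity it suffices to prove $\Psi(aE_{ij},bE_{kl})=0$ for all $a,b\in A^\sharp$ and all indices. Here I would run the standard Peirce analysis relative to the orthogonal idempotents $E_{ii}$: the Jordan relation first forces $E_{ii}\,\Delta(E_{ii})\,E_{ii}=0$ and $(\mathbf{I}-E_{ii})\,\Delta(E_{ii})\,(\mathbf{I}-E_{ii})=0$, so $\Delta(E_{ii})$ has only off-diagonal Peirce components; then, feeding products of matrix units into the identity~\eqref{j3} and using $E_{ij}\mathbf{a}E_{kl}=a_{jk}E_{il}$ from~\eqref{mu1}, one expresses $\Delta$ on each $aE_{ij}$ through its Peirce components and checks the relevant index cases ($j=k$ versus $j\neq k$, diagonal versus off-diagonal) to conclude $\Psi\equiv 0$. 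This is the classical fact that every Jordan derivation from $M_n(R)$ with $n\ge 2$ into a bimodule is a derivation, and the hypothesis $n\ge 2$ enters exactly here, through the availability of genuinely off-diagonal matrix units $E_{ij}$ with $E_{ij}^2=0$.

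Once $\Delta$ is known to be a derivation the corollary is immediate: $\Delta$ is the required continuous derivation $M_n(A^\sharp)\to M_n(A)^*$ extending $D$, and since $M_n(A)$ is an ideal, for $\mathbf{a},\mathbf{b}\in M_n(A)$ we get $D(\mathbf{a}\mathbf{b})=\Delta(\mathbf{a}\mathbf{b})=\Delta(\mathbf{a})\cdot\mathbf{b}+\mathbf{a}\cdot\Delta(\mathbf{b})=D(\mathbf{a})\cdot\mathbf{b}+\mathbf{a}\cdot D(\mathbf{b})$, so $D=\Delta|_{M_n(A)}$ is a derivation. I expect the only genuine obstacle to be the bookkeeping in the Peirce/matrix-unit computation; the reduction to it via Lemma~\ref{lj1} and the final descent to $D$ are both routine.
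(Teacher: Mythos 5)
Your proposal is correct and follows the same skeleton as the paper: both form the diagonal bounded approximate identity for $M_n(A)$, apply Lemma~\ref{lj1} with $B=M_n(A^\sharp)$ to obtain a continuous Jordan derivation $\Delta\colon M_n(A^\sharp)\to M_n(A)^*$ extending $D$, and then upgrade $\Delta$ to a derivation on the unital matrix algebra before restricting back to $M_n(A)$. The one point of divergence is the upgrading step: the paper simply cites Johnson's result \cite[Theorem 7.1]{J} that every continuous Jordan derivation from $M_n$ of a unital Banach algebra into a Banach bimodule is a derivation, whereas you propose to re-prove this classical fact by hand via a Peirce analysis relative to the idempotents $E_{ii}$ and the relation~\eqref{mu1}. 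Your sketch is sound in its ingredients --- the skew-symmetry of the defect $\Psi$, the identities $E_{ii}\cdot\Delta(E_{ii})\cdot E_{ii}=0$ and $(\mathbf{I}-E_{ii})\cdot\Delta(E_{ii})\cdot(\mathbf{I}-E_{ii})=0$, the reduction by bilinearity to elements $aE_{ij}$, and the correct observation that $n\ge 2$ enters through the square-zero off-diagonal matrix units --- and the target statement is indeed a classical theorem (in the purely algebraic setting it goes back to Jacobson--Rickart; see also~\cite{Br}), valid with no continuity hypothesis at all, so there is no genuine gap. What each route buys: the citation is economical and shifts all bookkeeping to~\cite{J}; your version is self-contained and makes visible exactly where $n\ge 2$ is used, at the cost of a substantial case analysis on index patterns ($j=k$ versus $j\ne k$, diagonal versus off-diagonal) that you assert rather than carry out, and which in a final write-up would either have to be done in full or, more sensibly, replaced by the reference the paper uses.
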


\begin{proof}
Let $(u_\lambda)_{\lambda\in\Lambda}$ be a bounded approximate identity for $A$ and,
for each $\lambda\in\Lambda$, take $\mathbf{u}_\lambda$ to be the matrix with $u_\lambda$ in
the diagonal and $0$ elsewhere.
Then $(\mathbf{u}_\lambda)_{\lambda\in\Lambda}$ is a bounded approximate identity for $M_n(A)$.
Further, $M_n(A)$ is a closed two-sided ideal of $M_n(A^\sharp)$. According to Lemma~\ref{lj1},
there exists a continuous Jordan derivation $\Delta\colon M_n(A^\sharp)\to M_n(A)^*$ which extends $D$.
By~\cite[Theorem 7.1]{J}, $\Delta$ is a derivation and this proves the result.
\end{proof}

\begin{lemma}\label{lj3}
Let $A$ be a cyclically amenable Banach algebra with a bounded approximate identity.
Then $M_n(A)$ is cyclically amenable.
\end{lemma}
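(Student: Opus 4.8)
The plan is to reduce an arbitrary continuous cyclic derivation on $M_n(A)$ to a continuous cyclic derivation on the corner $A\cong AE_{11}$, apply the cyclic amenability of $A$ there, and then reassemble an implementing functional. So let $D\colon M_n(A)\to M_n(A)^*$ be a continuous cyclic derivation. Being a derivation, $D$ is in particular a continuous Jordan derivation, so Corollary~\ref{cj2} yields a continuous derivation $\Delta\colon M_n(A^\sharp)\to M_n(A)^*$ extending $D$; this is the step that makes the genuine matrix units $E_{ij}\in M_n(A^\sharp)$ available. The linear span $N$ of the $E_{ij}$ is a subalgebra isomorphic to $M_n(\mathbb{C})$, and $\Delta|_N$ is a bounded derivation into the $N$-bimodule $M_n(A)^*$. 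Since it is classical that every derivation from $M_n(\mathbb{C})$ into a bimodule is inner, there is $\psi\in M_n(A)^*$ with $\Delta(E_{ij})=E_{ij}\cdot\psi-\psi\cdot E_{ij}$ for all $i,j$. Subtracting the inner derivation $x\mapsto x\cdot\psi-\psi\cdot x$ from $\Delta$, and recalling that inner derivations are cyclic so that this subtraction preserves cyclicity of the restriction to $M_n(A)$, I may assume $\Delta(E_{ij})=0$ for all $i,j$; it then suffices to show that the restriction of this normalized $\Delta$ to $M_n(A)$ is inner.

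With the matrix units annihilated, $\Delta$ is governed by its values on the corner. By~\eqref{mu1} one has $aE_{ij}=E_{i1}(aE_{11})E_{1j}$ for $a\in A$, and since $\Delta$ vanishes on $E_{i1}$ and $E_{1j}$ the Leibniz rule collapses to
\[
\Delta(aE_{ij})=E_{i1}\cdot\Delta(aE_{11})\cdot E_{1j}.
\]
Evaluating at $y\in M_n(A)$ and using~\eqref{mu1} again in the form $E_{1j}yE_{i1}=y_{ji}E_{11}$ gives $\Delta(aE_{ij})(y)=\Delta(aE_{11})(y_{ji}E_{11})$. I then define $d\colon A\to A^*$ by $d(a)(b)=\Delta(aE_{11})(bE_{11})$. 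Using $(aE_{11})(bE_{11})=(ab)E_{11}$ together with the bimodule formulas $(f\cdot a)(c)=f(ac)$ and $(a\cdot f)(c)=f(ca)$, a direct check shows that $d$ is a continuous derivation; and since $D$ is cyclic and $\Delta$ extends it, inserting $x=aE_{11}$, $y=bE_{11}$ into $\Delta(x)(y)=-\Delta(y)(x)$ yields $d(a)(b)=-d(b)(a)$, so $d$ is cyclic.

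As $A$ is cyclically amenable, $d$ is inner, say $d(a)(b)=\phi(ba)-\phi(ab)$ for some $\phi\in A^*$. For $\mathbf{a}=(a_{ij})\in M_n(A)$, linearity and the corner formula give
\[
\Delta(\mathbf{a})(y)=\sum_{i,j}d(a_{ij})(y_{ji})=\sum_{i,j}\bigl(\phi(y_{ji}a_{ij})-\phi(a_{ij}y_{ji})\bigr).
\]
Defining $\Psi\in M_n(A)^*$ by $\Psi(y)=\sum_{k}\phi(y_{kk})$ and expanding $(y\mathbf{a})_{kk}=\sum_{m}y_{km}a_{mk}$, one verifies after relabelling the summation indices that this equals $\Psi(y\mathbf{a})-\Psi(\mathbf{a}y)=(\mathbf{a}\cdot\Psi-\Psi\cdot\mathbf{a})(y)$. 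Hence the normalized $\Delta$ restricts on $M_n(A)$ to the inner derivation implemented by $\Psi$, which completes the proof. I expect the step needing the most care to be the normalization: one must be sure that the adjustment annihilating the matrix units is itself inner as a derivation of $M_n(A)$, so that replacing $D$ by it does not destroy the cyclic property on which the corner argument depends; this is exactly guaranteed by the observation, recorded in the text, that inner derivations are cyclic.
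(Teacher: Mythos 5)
Your proposal is correct and follows essentially the same route as the paper: extend $D$ to a derivation on $M_n(A^\sharp)$ via Corollary~\ref{cj2}, use innerness of derivations on $M_n(\mathbb{C})$ to handle the matrix units, reduce to a continuous cyclic derivation on the corner $A\cong AE_{11}$, apply cyclic amenability of $A$, and reassemble the implementing functional as a trace-type functional $\Psi(y)=\sum_k\phi(y_{kk})$. The only (cosmetic) difference is that you normalize $\Delta$ so that $\Delta(E_{ij})=0$, making the identity $\Delta(aE_{ij})=E_{i1}\cdot\Delta(aE_{11})\cdot E_{1j}$ immediate from the Leibniz rule, whereas the paper keeps the functional $F_1$ explicit, works with $D_0=D-(\,\cdot\,\cdot F_1-F_1\cdot\,\cdot\,)$, and verifies the corresponding identity~\eqref{pp1} by direct computation.
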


\begin{proof}
Let $D\colon M_n(A)\to M_n(A)^*$ be a continuous cyclic derivation.

From Corollary~\ref{cj2} it follows that there exists a continuous derivation
$\Delta\colon M_n(A^\sharp)\to M_n(A)^*$ which extends $D$.
We now consider the restriction of $\Delta$ to the subalgebra $M_n(\mathbb{C})$ of $M_n(A^\sharp)$.
This gives a derivation from $M_n(\mathbb{C})$ to the Banach $M_n(\mathbb{C})$-bimodule $M_n(A)^*$.
Since every derivation from $M_n(\mathbb{C})$ to any Banach $M_n(\mathbb{C})$-bimodule is inner
(see~\cite[Example 1.9.23]{D}), we conclude that there exists a functional $F_1\in M_n(A)^*$ such that
\[
\Delta(\alpha)=\alpha\cdot F_1-F_1\cdot\alpha \quad  (\alpha\in M_n(\mathbb{C})),
\]
\emph{i.e.},
\[
\Delta(\alpha)(\mathbf{a})=F_1(\mathbf{a}\alpha-\alpha\mathbf{a}) \quad
(\alpha\in M_n(\mathbb{C}), \, \mathbf{a}\in M_n(A)).
\]

The map $D_0\colon M_n(A)\to M_n(A)^*$ defined by
\begin{equation}\label{pp2}
D_0(\mathbf{a})=D(\mathbf{a})-(\mathbf{a}\cdot F_1-F_1\cdot\mathbf{a}) \quad  (\mathbf{a}\in M_n(A))
\end{equation}
is a continuous cyclic derivation.
We claim that
\begin{equation}\label{pp1}
D_0(\alpha \mathbf{a}\beta)=\alpha D_0(\mathbf{a})\beta
\quad  (\alpha,\beta\in M_n(\mathbb{C}), \, \mathbf{a}\in M_n(A)).
\end{equation}
Indeed, if $\alpha,\beta\in M_n(\mathbb{C})$ and $\mathbf{a}\in M_n(A)$, then
\begin{equation*}
\begin{split}
D_0(\alpha\mathbf{a}\beta)
& =
D(\alpha\mathbf{a}\beta)-(\alpha\mathbf{a}\beta)\cdot F_1+F_1\cdot(\alpha\mathbf{a}\beta)\\
& =
\Delta(\alpha\mathbf{a}\beta)-(\alpha\mathbf{a}\beta)\cdot F_1+F_1\cdot(\alpha\mathbf{a}\beta)\\
& =
\Delta(\alpha)\cdot(\mathbf{a}\beta)+\alpha\cdot\Delta(\mathbf{a})\cdot\beta+(\alpha \mathbf{a})\cdot\Delta(\beta)\\
&\quad {}-(\alpha\mathbf{a}\beta)\cdot F_1+F_1\cdot(\alpha\mathbf{a}\beta)\\
& =
(\alpha\cdot F_1-F_1\cdot\alpha)(\mathbf{a}\beta)
+\alpha\cdot D(\mathbf{a})\cdot\beta
+(\alpha \mathbf{a})\cdot (\beta\cdot F_1-F_1\cdot\beta)\\
&\quad {}-(\alpha\mathbf{a}\beta)\cdot F_1+F_1\cdot(\alpha\mathbf{a}\beta)\\
& =
\alpha\cdot F_1\cdot(\mathbf{a}\beta)
+\alpha\cdot D(\mathbf{a})\cdot\beta
-(\alpha\mathbf{a})\cdot F_1\cdot\beta \\
& =
\alpha\cdot\bigl(
D(\mathbf{a})-\mathbf{a}\cdot F_1+F_1\cdot\mathbf{a}
\bigr)\cdot\beta \\
& =
\alpha\cdot D_0(\mathbf{a})\cdot\beta.
\end{split}
\end{equation*}

We now define a continuous linear map $d\colon A\to A^*$ by
\[
d(a)(b)=D_0(aE_{11})(bE_{11}) \quad  (a,b\in A).
\]
It is clear that $d$ cyclic. The task is now to show that $d$ is a derivation.
Let $a$, $b$, $c\in A$. Then
\begin{equation*}
\begin{split}
d(ab)(c)
& =
D_0\bigl((ab)E_{11}\bigr)(cE_{11})
=
D_0\bigl((aE_{11})(bE_{11})\bigr)(cE_{11})\\
& =
\bigl(D_0(aE_{11})\cdot(bE_{11})+(aE_{11})\cdot D(bE_{11})\bigr)(cE_{11})\\
& =
D_0(aE_{11})\bigl((bE_{11})(cE_{11})\bigr)+D(bE_{11})\bigl((cE_{11})(aE_{11})\bigr)\\
& =
D_0(aE_{11}) \bigl( (bc)E_{11} \bigr) +D(bE_{11})\bigl((ca)E_{11}\bigr)\\
& =
d(a)(bc)+d(b)(ca)\\
& =
\bigl(d(a)\cdot b+a\cdot d(b)\bigr)(c).
\end{split}
\end{equation*}
Since $d$ is a continuous cyclic derivation and $A$ is cyclically amenable, it follows that there exists
$f\in A^*$ such that
\[
d(a)=a\cdot f-f\cdot a \quad  (a\in A),
\]
\emph{i.e.},
\[
d(a)(b)=f(ba-ab) \quad  (a,b\in A).
\]
We now define $F_2\in M_n(A)^*$ by
\[
F_2(\mathbf{a})=\sum_{k=1}^{n} f(a_{kk}) \quad  (\mathbf{a}=(a_{ij})\in M_n(A)).
\]
We claim that
\begin{equation}\label{pp3}
D_0(\mathbf{a})=\mathbf{a}\cdot F_2-F_2\cdot\mathbf{a} \quad  (\mathbf{a}\in M_n(A)).
\end{equation}
Let $\mathbf{a}=(a_{ij})$, $\mathbf{b}=(b_{ij})\in M_n(A)$.
Taking into account~\eqref{mu1} and~\eqref{pp1}, we have
\begin{equation*}
\begin{split}
D_0(\mathbf{a})(\mathbf{b})
& =
D_0\left(\sum_{i,j=1}^n a_{ij}E_{ij}\right)(\mathbf{b})
=
\sum_{i,j=1}^n D_0(a_{ij}E_{ij})(\mathbf{b})\\
& =
\sum_{i,j=1}^n D_0\bigl(E_{i1}(a_{ij}E_{11})E_{1j}\bigr)(\mathbf{b})
=
\sum_{i,j=1}^n \bigl(E_{i1}\cdot D_0(a_{ij}E_{11})\cdot E_{1j}\bigr)(\mathbf{b})\\
& =
\sum_{i,j=1}^nD_0(a_{ij}E_{11})(E_{1j}\mathbf{b}E_{i1})
=
\sum_{i,j=1}^nD_0(a_{ij}E_{11})(b_{ji}E_{11})\\
& =
\sum_{i,j=1}^nd(a_{ij})(b_{ji})
=
\sum_{i,j=1}^nf(b_{ji}a_{ij}-a_{ij}b_{ji})\\
& =
F_2(\mathbf{b}\mathbf{a}-\mathbf{a}\mathbf{b}).
\end{split}
\end{equation*}
According to~\eqref{pp2} and~\eqref{pp3}, we have
\[
D(\mathbf{a})=\mathbf{a}\cdot F-F\cdot\mathbf{a} \quad  (\mathbf{a}\in M_n(A)),
\]
where $F\in M_n(A)^*$ is defined by $F=F_1+F_2$, which proves that $D$ is inner as required.
\end{proof}

\begin{theorem}\label{cj4}
Let $A$ be a cyclically amenable Banach algebra with a bounded approximate identity, and $n\ge 2$. Then $M_n(A)$ is a zero Lie product determined Banach algebra.
\end{theorem}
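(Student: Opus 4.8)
The plan is to invoke Theorem~\ref{tja} with $M_n(A)$ in the role of $A$. That theorem reduces the zero Lie product determined property to a single statement: that every continuous cyclic Jordan derivation from $M_n(A)$ into $M_n(A)^*$ is an inner derivation. So the entire task is to verify this hypothesis, and the two preceding results in this section are tailored precisely to supply the two ingredients needed.

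First I would fix an arbitrary continuous cyclic Jordan derivation $D\colon M_n(A)\to M_n(A)^*$. Since $A$ carries a bounded approximate identity and $n\ge 2$, Corollary~\ref{cj2} applies and shows that $D$ is automatically an honest derivation; this is where the real analytic work sits, via the extension to $M_n(A^\sharp)$ furnished by Lemma~\ref{lj1} together with Johnson's theorem on Jordan derivations. Consequently $D$ is a continuous cyclic \emph{derivation}.

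Second I would appeal to Lemma~\ref{lj3}: because $A$ is cyclically amenable and has a bounded approximate identity, $M_n(A)$ is cyclically amenable, which is exactly the assertion that every continuous cyclic derivation from $M_n(A)$ into its dual is inner. Applying this to $D$ yields that $D$ is inner. Combining the two steps, every continuous cyclic Jordan derivation from $M_n(A)$ into $M_n(A)^*$ is an inner derivation, so Theorem~\ref{tja} delivers the conclusion that $M_n(A)$ is zero Lie product determined.

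Since the two substantive facts—that Jordan derivations on $M_n(A)$ collapse to derivations, and that cyclic derivations on $M_n(A)$ are inner—are already established in Corollary~\ref{cj2} and Lemma~\ref{lj3}, no genuine obstacle remains; the argument is a direct assembly. If I had to isolate the delicate point, it would be merely confirming that the hypotheses line up, namely that $n\ge 2$ and the bounded approximate identity feed Corollary~\ref{cj2}, while cyclic amenability of $A$ together with the approximate identity feed Lemma~\ref{lj3}. These are precisely the standing assumptions of the theorem, so the proof is short.
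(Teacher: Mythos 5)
Your proof is correct and is essentially the paper's own argument: the paper likewise combines Corollary~\ref{cj2} and Lemma~\ref{lj3}, then concludes via Corollary~\ref{rtja}, which is nothing but Theorem~\ref{tja} specialized in exactly the way you spell out. The only difference is cosmetic---you unfold Corollary~\ref{rtja} into the explicit two-step verification (Jordan derivations collapse to derivations, cyclic derivations are inner) of the hypothesis of Theorem~\ref{tja}.
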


\begin{proof}
By Corollary~\ref{cj2} and Lemma~\ref{lj3},
$M_n(A)$ is cyclically amenable and every continuous Jordan derivation from $M_n(A)$ into ${M_n(A)}^{*}$ is a derivation. Corollary~\ref{rtja}  completes the proof.
\end{proof}

At this point it seems appropriate to mention a purely algebraic
result from~\cite{BGS}, stating that if a unital algebra $A$ is
zero Lie product determined, then so is $M_n(A)$. (The definition of zero Lie product algebra is the same as that of zero Lie product Banach algebra, only  the continuity is, of course, neglected.)

\begin{example}
Theorem~\ref{cj4} applies in any of the following cases.
\begin{enumerate}
\item
$A$ is a unital Banach algebra unitally polynomially generated by a single element.
\item
$A=L^1(G)$ where $G$ is any locally compact group.
\item
$A$ is a $C^*$-algebra. However, in this case $M_n(A)$ is itself a $C^*$-algebra
and therefore Corollary~\ref{wac} is applicable.
\end{enumerate}
\end{example}

\begin{remark}
Let $A$ be a unital Banach algebra, and $n\ge 2$.
By~\cite[Proposition~4.2]{ABESV}, the Banach algebra $M_n(A)$ is generated by idempotents
and~\cite[Example~1.3.3 and Theorem~2.11]{ABEV0} then show that it has property $\mathbb{B}$.
According to \cite[Theorem 2.7(iii)]{DLS}, the Banach algebra $M_n(A)$ is weakly amenable if and
only if $A$ is weakly amenable.
Consequently, if $A$ is cyclically amenable but not weakly amenable (such as $A(\mathbb{D})$, $C^n([a,b])$, $AC([a,b])$),
then the algebra $M_n(A)$ is not weakly amenable and cannot be handled by using Corollary~\ref{wa} but Theorem~\ref{cj4} works.
\end{remark}

\end{document}